\newtheorem{Theorem}{Theorem}[section]
\newtheorem{Lemma}{Lemma}[section]
\newtheorem{Proposition}{Proposition}[section]
\theoremstyle{remark}
\newtheorem{Remark}{Remark}[section]
\theoremstyle{definition}
\renewcommand{\P}{\ensuremath{\mathbb{P}\,}}
\newcommand{\E}{\ensuremath{\mathbb{E}\,}}
\newcommand{\N}{\ensuremath{\mathbb{N}}}
\renewcommand{\d}{\ensuremath{\,{\rm d}}}
\newcommand{\e}{\ensuremath{{\rm e}}}
\newcommand{\var}{\ensuremath{{\rm var}\,}}
\begin{document}


\title[Pathwise LSE for SPDEs]{Pathwise least-squares estimator for linear SPDEs with additive fractional noise}


\author{Pavel K\v r\'i\v z}
\address{Charles University, Faculty of Mathematics and Physics, Sokolovsk\' a 83, Prague 8, 186 75, Czech Republic.
\newline
University of Chemistry and Technology,  Prague, Department of Mathematics, Technick\' a 5, Prague 6, Czech Republic. 
}
\email{kriz@karlin.mff.cuni.cz}

\author{Jana \v Snup\' arkov\' a}
\address{University of Chemistry and Technology,  Prague, Department of Mathematics, Technick\' a 5, Prague 6, Czech Republic. }
\email{jana.snuparkova@vscht.cz}

\thanks{The work of PK was supported by the Czech Science Foundation project No. 19-07140S and the work of J\v S was supported by the grant LTAIN19007 Development of Advanced Computational Algorithms for Evaluating Post-surgery~Rehabilitation.}


\subjclass[2010]{62M09, 60H15, 60G22}

\keywords{Drift estimation,Least squares,Linear SPDEs,Fractional Brownian motion,Pathwise stochastic integration}





\begin{abstract}
This paper deals with the drift estimation in linear stochastic evolution equations (with emphasis on linear SPDEs) with additive fractional noise (with Hurst index ranging from 0 to 1) via least-squares procedure. Since the least-squares estimator contains stochastic integrals of divergence type, we address the problem of its pathwise (and robust to observation errors) evaluation by comparison with the pathwise integral of Stratonovich type and using its chain-rule property. The resulting pathwise LSE is then defined implicitly as a solution to a non-linear equation. We study its numerical properties (existence and uniqueness of the solution) as well as statistical properties (strong consistency and the speed of its convergence). The asymptotic properties are obtained assuming fixed time horizon and increasing number of the observed Fourier modes (space asymptotics). We also conjecture the asymptotic normality of the pathwise LSE.
\end{abstract}

\maketitle

\section{Introduction}
Least-squares type estimators of an unknown drift parameter have recently become very popular in the setting of (semi)linear SPDEs driven by an infinite-di\-mensional Brownian motion, because they have advantageous asymptotic properties (they basically coincide with MLE estimators in this case) and are relatively easy to implement. For recent theoretical works, the reader may check \cite{PS20}, \cite{CDK20}, \cite{AR20_pre} or \cite{ACP20_pre} to name just a few. The theory for models with Brownian motion has reached certain level of maturity so that it enables several interesting practical applications, such as \cite{ABJR20_pre} or \cite{P_et_al21}.

Wide range of randomly evolving phenomena are forced with auto-correlated noise, which can be effectively modeled by a fractional Brownian motion (fBm). Stochastic evolution equations driven by a fBm appear in diverse fields such as biology, neuroscience, hydrology, climatology, finance and many others (see e.g. monographs \cite{Biagini_et_al_2008} or \cite{Mishura_2008} for discussion). Least-squares estimators of the drift parameter for continuously observed trajectories of solutions to one-dimensional linear equations with additive fractional noise have been studied in \cite{HN10} for Hurst index $H\geq 1/2$ and \cite{HNZ17} for general $H \in (0,1)$. 

The available literature on parameter estimation for infinite-dimensional equations driven by an (infinite-dimensional) fBm is rather sparse. A least-squares estimator for a one-dimensional projection of the solution has been studied in long-span asymptotic regime in \cite{MT13} assuming the regular case $H \geq 1/2$. Drift estimation in spectral setting (Fourier modes are observed) for infinite-dimensional equations with fractional noise has been studied in \cite{CLP09} (MLE approach, $H \geq 1/2$) and \cite{Kriz2020} (ergodic-type estimator, $H \in (0,1)$).  We are not aware of any study on LSE in spectral setting (Fourier modes are observed) for these models. 

LSE-type estimators typically contain stochastic integrals, which makes them difficult to apply. For equations with Brownian noise, evaluations of It\^{o} stochastic integrals may be sensitive to small perturbations of trajectories. A robust version of LSE can be obtained by application of It\^{o} formula (for simple equations) or rough path theory (more complex equations), see \cite{FrizDiehlMai2016}. The least-squares estimators for equations with fBm incorporate divergence-type integrals (sometimes called the Skorokhod integrals). These are not defined pathwise, but rather as the adjoint operators to Malliavin derivatives, which makes them extremely difficult (if not impossible) to evaluate from the observed trajectory in practice or in simulations, which prevents them from wider use. To avoid these complications, the authors in \cite{HNZ17} switched to the ergodic-type estimator for one-dimensional fractional Ornstein-Uhlenbeck process, which contains only Riemann integrals. Estimators of ergodic type for more general equations driven by a finite-dimensional fBm were recently studied in \cite{PTV20}. Application of this ergodic approach in the spectral setting for infinite-dimensional equations corresponds to the weighted minimum-contrast estimator (weighted MCE) introduced in \cite{Kriz2020}. Such estimator is plausible for stationary processes, but fails for processes at far-from-stationary state and is difficult to generalize to more complex equations, because (in contrast to LSE) it requires ergodicity and precise knowledge of the limiting covariance operator. 

In this paper, we start with derivation of a new spectral version of the least-squares estimator (theoretical LSE) for an unknown drift parameter in linear SPDEs driven by an additive infinite-dimensional fractional Brownian motion and prove its strong consistency in space (incl. speed of convergence) by the tools of Malliavin calculus. We choose to work in spectral setting (observations in spectral space domain rather than in physical space domain are available) and to study asymptotic properties under increasing number of the observed Fourier modes while time horizon is fixed. Such approach makes it possible to consistently (with error approaching zero) estimate the unknown parameter based on observations in fixed time horizon, but with sufficient detail in space (number of Fourier modes). This is particularly useful for observations having limited time horizon but high space resolution/detail. It contrasts to more typical time asymptotics, which assumes increasing time horizon and which is not studied here. Moreover, sever complications with its numerical evaluation has led us to construct a pathwise version of the LSE, which is defined (and computable) from the observed trajectory and which is robust to small perturbations in the observations. 

Our construction of the robust pathwise LSE is inspired by the work \cite{FrizDiehlMai2016}. It is based on calculation of pathwise Stratonovich integral, which can be expressed explicitly in our case, and compensation for difference between Stratonovich and divergence-type integrals. In our setting with fBm, this leads to an implicitly defined estimator. We prove its existence and uniqueness, as well as its strong consistency in space. Surprisingly, if  $H>1/2$, it has slightly better performance than the theoretical LSE. To our best knowledge, this is the first attempt to define a pathwise robust LSE for equations driven by fBm. We believe that this pathwise least-squares approach is applicable also for different models with fBm (such as those in \cite{MT13} or \cite{HNZ17}) and may offer estimators applicable in practice or in simulations. 

Main purpose of this paper is to make the first step in the development of a theoretical framework necessary for studying spectral asymptotic properties as well as performing effective enumeration of the least-squares estimators for stochastic evolution equations with infinite-dimensional auto-correlated noise. The assumed linearity of the operators ensures reasonable clarity and simplicity of the exposition. The next step would be to study this problem in the setting of semilinear equations with non-linear lower order perturbation with numerous practical applications. This, however, requires fine analysis of space regularity of solutions and is beyond the scope of this work.

The paper is organized as follows. In section \ref{sect:Problem setup} we introduce the studied problem in more detail. The two new estimators -- theoretical and pathwise LSE -- are derived in section \ref{sect:Derivation of the estimator}. Existence and uniqueness of the implicitly defined pathwise LSE is studied in section \ref{sect: Existence and uniqueness of pathwise LSE}. Section \ref{sect: Strong consistency} is devoted to the strong consistency of the estimators. A simulation study is presented is section \ref{sect: Simulation study}. Section \ref{sect:Conclusions} summarizes main results and findings of this paper. In Appendix, we briefly describe some elements of Malliavin calculus, which are useful for the asymptotic analysis and outline the connection between cylindrical fractional Brownian motion and Gaussian noise that is white in space and fractional in time.


\section{Problem setup}\label{sect:Problem setup}
\subsection{The model}
Consider the evolution equation
\begin{equation}\label{sEE}
\d X(t) = \lambda A X(t) \d t + B X(t) \d t + \d B^H(t),\quad X(0)=X_0,
\end{equation}
in a real separable Hilbert space~$V$ where $A,B$ are densely defined linear operators, $B^H$ is a~cylindrical fractional Brownian motion on~$V$, $X_0 \in V$ and $\lambda$ is a~positive unknown parameter, which we want to estimate. 

Throughout this paper we assume that the equation~\eqref{sEE} is diagonalizable, i.e. there exists an orthonormal basis $\{e_k\}_{k=1}^{\infty}$ in~$V$ such that for any $k\in\N$ we have
\[A e_k=-\alpha_k e_k, \ B e_k = -\beta_k e_k,\ \langle B^H(t),e_k\rangle_V = B^H_k(t),\]
where $\alpha_k \geq 0$ and $\beta_k \geq 0$. Furthermore, set 
\[\mu_k =\lambda \alpha_k + \beta_k,\ k\in\N.\]

\begin{Remark}
For existence of the solution to \eqref{sEE} in an appropriate interpolation space, constructed via spectral decomposition, it suffices that for some $\gamma \in \mathbb{R}$ 
\begin{equation}\label{eq: existence cond}
    \sum_{k=1}^{\infty}\frac{1}{(1+\mu_k)^{\gamma}} < \infty
\end{equation}
holds. For more details and proof, see e.g. Theorem 2.1. (and Remark 2.1.) in \cite{Kriz2020}. 
\end{Remark}
We assume (throughout the paper) that the existence condition \eqref{eq: existence cond} is satisfied. It also implies that
\begin{equation}\label{lim mu_k}
\lim_{k \to \infty} \mu_k = \infty.
\end{equation} 

Denote the Fourier modes (projections to eigenvectors) of the solution by $x_k(t) = \langle X(t),e_k \rangle_{V}$. By diagonality of the equation \eqref{sEE} these projections satisfy the system of independent one-dimensional equations
\begin{equation}\label{SDE1}
\begin{array}{rcl}
{\rm d} x_k(t)&=&-\mu_k x_k(t)\d t+ \d B^H_k(t),\ t>0,\ k\in\N,\\
x_k(0)&=&\langle X_0,e_k \rangle_{V},
\end{array}
\end{equation}
where $\{B^H_k(t),t\geq 0\}$ are mutually independent real-valued fractional Brownian motions with the same Hurst index $H\in (0,1)$.

The solutions to the equations~\eqref{SDE1} are mutually independent real-valued fractional Orn\-stein-Uhlenbeck processes given by a formula
\begin{equation}\label{OU}
x_k(t)=\e^{-\mu_k t}x_k(0) + \int_0^t \e^{-\mu_k (t-s)} \d B^H_k(t),\ t\geq 0.
\end{equation}
for any $k\in\N$.

\begin{Remark}\label{rem: diff oper}
If the operators $A,B$ from~\eqref{sEE} are negative definite self-adjoint elliptic linear differenetial operators of even orders $2m_1, 2m_2\, (m_1,m_2 \in \N)$, respectively, acting on a compact $d$-dimensional manifold, then (see \cite{SafarovVassiliev1997})
\begin{equation}\label{SPDE_ev}
\alpha_k \sim k^{2m_1/d}, \hbox{  and  } \beta_k \sim k^{2m_2/d},
\end{equation}
where 
\begin{equation}\label{def: sim}
a_k\sim b_k \text{ means that } a_k / b_k \to C \in (0, \infty) \text{ as } k \to \infty.
\end{equation}
Such operators are of great interest as they frequently appear in various stochastic PDEs. 
\end{Remark}

\subsection{Statistical problem}
To estimate the value of $\lambda$, we have continuous sample trajectories of the first $N$ Fourier modes observed on a fixed time-window $[0,T]$ at our disposal, i.e. our data are  $\big\{x_k(t), t\in [0,T]\big\},\, k = 1,\ldots,N$. We aim at deriving an estimator that would be computable from this data and that would be consistent with increasing number of Fourier modes $N \to \infty$ (the so-called space asymptotics), the time horizon $T$ being fixed.

\begin{Remark}
In this paper we assume that the Hurst parameter $H$ is known. If not, it can be consistently determined from a single continuous trajectory of the process $\big\{x_1(t), t \in [0,T]\big\}$ by some of various techniques developed for estimating $H$ for real-valued processes sampled under in-fill asymptotics regime (discrete time observation with fixed time horizon and decreasing time step), cf. \cite{Istas-Lang1997}, \cite{gloter2007}, \cite{Coeurjolly2008}, or \cite{Rosenbaum2008}) to name just a few. If, moreover, the noise processes for individual Fourier modes $B^{H}_k$ have different intensities (volatilities, denote by $\sigma_k$), we can simultaneously and consistently estimate $\sigma_k$ and $H$  from the observed modes under in-fill asymptotics regime using the powers of the second order variations (see \cite{Berzin_et_al2014}, Chapter 3.3) and subsequently eliminate different noise intensities by appropriate rescaling of the observed processes $x_k$. 
\end{Remark}


\section{Derivation of the estimator}\label{sect:Derivation of the estimator}
\subsection {Theoretical  LSE}
We derive the estimator for the drift parameter by applying the least-squares concept to the SDEs understood as a system of linear regression models. In particular, we find~$\lambda$ which minimizes (for fixed time horizont~$T$) the formal sum
\[\sum_{k=1}^N \int_0^T \big(\dot x_k(t) + \mu_k x_k(t)\big)^2\d t.\]
Note that this is only a heuristic technique, since time derivatives $\dot x_k(t)$ do not exist (in the classical sense). However, we can rewrite the minimizer back in terms of the well-defined stochastic integrals and Riemann integrals. It reads
\begin{equation}\label{LSE_theor1}
\hat \lambda_N = -\dfrac{\sum_{k=1}^N \int_0^T \alpha_k x_k(t)\d x_k(t)+\sum_{k=1}^N \int_0^T \alpha_k\beta_k x_k^2(t)\d t}{\sum_{k=1}^N \int_0^T \alpha_k^2 x_k^2(t)\d t},
\end{equation}
where the stochastic integral is understood in the Skorokhod sense (sometimes referred to as the divergence type integral). Applying the stochastic differential from the equation~\eqref{SDE1} we obtain
\begin{equation}\label{LSE_theor2}
\hat \lambda_N = \lambda -\dfrac{\sum_{k=1}^N \alpha_k\int_0^T x_k(t) \d B^H_k(t)}{\sum_{k=1}^N \alpha_k^2\int_0^T x_k^2(t)\d t}.
\end{equation}
This form is convenient for proving the asymptotic properties of the estimator~$\hat \lambda_N$ whereas the form~\eqref{LSE_theor1} is better for derivation of the pathwise version of the LSE for $\lambda$. 

\begin{Remark}
We did not specify the type of stochastic integral with respect to the fBm in the equations \eqref{sEE} or \eqref{SDE1}, because all integration concepts coincide in case of additive noise. However, the choice of the appropriate type of stochastic integration in \eqref{LSE_theor2} (and in \eqref{LSE_theor1}) is critical, since the integrands are stochastic processes here. The choice of the Skorokhod type integral is motivated by the fact that its expectation is zero, which is advantageous for the error term in  \eqref{LSE_theor2} and will be utilized in the sequel. If we choose the pathwise Stratonovich integral, for example, its nonzero expectation would generate unwanted bias in \eqref{LSE_theor2} and the corresponding estimator would not be consistent.
\end{Remark}

\subsection {Robust pathwise LSE}
Although the Skorokhod type integral in \eqref{LSE_theor1} has advantageous probabilistic properties, it is not constructed pathwise, but rather as the adjoint operator to Malliavin derivative. This imposes sever complications to numerical evaluation of the estimator $\hat \lambda_N$ for the observed single trajectory. To overcome this issue, we derive below its robust pathwise version. 

The strategy, inspired by the paper \cite{FrizDiehlMai2016}, is to express the estimator in terms of the pathwise Stratonovich integral (for definition, see \cite{HNZ17}) with appropriate compensation for difference between Stratonovich and Skorokhod integrals.
\begin{equation}
\begin{aligned}
    \int_{0}^{T} x_k(t) \d x_k(t) &=  \int_{0}^{T} - \mu_k x_k^2(t) \d t + \int_{0}^{T} x_k(t) \d B_k^H(t) \\\nonumber
    &=  \int_{0}^{T} - \mu_k x_k^2(t) \d t + \left(\int_{0}^{T} x_k(t) \circ \d B_k^H(t)  -   \Delta(\mu_k) \right)\\
    &=  \int_{0}^{T} x_k(t) \circ \d x_k(t) -  \Delta(\mu_k),
\end{aligned}
\end{equation}
where $\int_{0}^{T} x_k(t) \circ \d x_k(t)$ stands for the pathwise Stratonovich integral and $\Delta(\mu_k)$ is the compensation. Since $x_k$ is in the first Wiener chaos (it is Gaussian), the compensation $\Delta(\mu_k)$ takes rather simple form (see \cite{HNZ17}, formulas (3.4)-(3.6)):
\begin{equation}\label{eq: compensation of Stratonovich to divergence}
\begin{aligned}
    \Delta(\mu_k) &= \mathbb{E} \left[\int_{0}^{T} x_k(t) \circ \d B_k^H(t)\right] \\
    &= \frac{1}{2} T^{2H} \big(1 -  \gamma(1,\mu_k T)\big) -  \mu_k^{-2H} \gamma(2H+1, \mu_k T) \Big(H-\frac{1}{2}\Big) \\&\qquad+ T H \mu_k^{1-2H} \gamma(2H, \mu_k T),
\end{aligned}
\end{equation}
where
\begin{equation}
\gamma(h, T) = \int_{0}^T \e^{-x} x^{h-1} \d x.\nonumber
\end{equation}
Moreover, since Stratonovich integral satisfies the rules of the first order calculus, we can easily evaluate (without need for rough path lift):
\begin{equation}
\int_{0}^{T} x_k(t) \circ \d x_k(t) = \frac{x^2_k(T) - x^2_k(0)}{2}.\nonumber
\end{equation}
This provides a simple pathwise formula for evaluation of the Skorokhod-type integral
\begin{equation}
    \int_{0}^{T} x_k(t) \d x_k(t) = \frac{x^2_k(T) - x^2_k(0)}{2} - \Delta(\mu_k),\nonumber
\end{equation}
and the corresponding formula for LSE
\begin{equation}
\hat{\lambda}_N = - \frac{\sum_{k=1}^{N}\alpha_k \left( \frac{x^2_k(T) - x^2_k(0)}{2} -  \Delta(\mu_k) \right) + \sum_{k=1}^{N}\alpha_k \beta_k \int_0^T x^2_k(t) \d t}{\sum_{k=1}^{N}\alpha_k^2 \int_0^T x^2_k(t) \d t}.\nonumber
\end{equation}

Unfortunately, $\mu_k = \alpha_k \lambda + \beta_k$ in this evaluation formula depend on the unknown value of parameter $\lambda$. A natural workaround here is to define new estimator (the pathwise LSE) as a solution to the following equation (in unknown $\Lambda$):

\begin{equation}\label{equation modified LSE}
    \Lambda = - \frac{\sum_{k=1}^{N}\alpha_k \left( \frac{x^2_k(T) - x^2_k(0)}{2} -  \Delta(\alpha_k \Lambda + \beta_k) \right) + \sum_{k=1}^{N}\alpha_k \beta_k \int_0^T x^2_k(t) \d t}{\sum_{k=1}^{N}\alpha_k^2 \int_0^T x^2_k(t) \d t}.
\end{equation}

\begin{Remark}
Although the theoretical LSE formula \eqref{LSE_theor1} does not explicitly contain the Hurst parameters $H$, its value is needed to evaluate the stochastic Skorokhod-type integral. Hence, it is present in defining equation for pathwise (computable) version of LSE. However, since we consider continuous-time setting, value of $H$ can be determined exactly from continuous observation of a single trajectory $\big\{x_k(t), t \in [0,T]\big\}$, as discussed above.
\end{Remark}

\begin{Remark}
If $H=1/2$ the pathwise LSE coincides with the theoretical LSE. This follows from the fact that the right-hand side of \eqref{equation modified LSE} is constant (in $\Lambda$) in this case, which is shown below.
\end{Remark}

\section{Existence and uniqueness of pathwise LSE}\label{sect: Existence and uniqueness of pathwise LSE}

We can not expect the existence and uniqueness of the positive solution to \eqref{equation modified LSE} in general. This can be seen on the simplest example of LSE for one-dimensional Ornstein-Uhlenbeck process (driven by a Wiener process)
\[
\hat{\lambda} = - \frac{\frac{x^2(T) - x^2(0)}{2} -  \frac{T}{2}}{\int_0^T x^2(t) \d t},
\]
which can provide negative estimates. We address this problem below by taking positive part of the solution and specifying, which one to choose if there are multiple solutions. To derive the conditions for the existence and uniqueness of the positive solution to equation \eqref{equation modified LSE}, we need to understand the properties of the function on its right-hand side. Set
\begin{equation}\label{eq: RHS function}
    R_N(\Lambda) = - \frac{\sum_{k=1}^{N}\alpha_k \left( \frac{x^2_k(T) - x^2_k(0)}{2} -  \Delta(\alpha_k \Lambda + \beta_k) \right) + \sum_{k=1}^{N}\alpha_k \beta_k \int_0^T x^2_k(t)\d t}{\sum_{k=1}^{N}\alpha_k^2 \int_0^T x^2_k(t)\d t},
\end{equation}
with function $\Delta$ defined in \eqref{eq: compensation of Stratonovich to divergence}. Direct calculations lead to the following formula for its first derivative (after cancelling some terms) 
\begin{equation}\label{eq: Delta 1. derivative}
\begin{aligned}
  \frac{\d \Delta}{\d \mu} (\mu) &=  H (1-2H) \big(T \mu^{-2H} \gamma(2H, \mu T) - \mu^{-2H-1} \gamma(2H+1, \mu T)  \big) \\
  &=  H (1-2H) \mu^{-2H-1} \int_0^{\mu T}\e^{-s}s^{2H-1}(\mu T - s) \d s\\
  &\begin{cases}
     > 0 \,\dots\, H < 1/2 \\
     = 0 \,\dots\, H = 1/2 \\
     < 0 \,\dots\, H > 1/2 \\
   \end{cases}, \quad \forall \mu > 0,
\end{aligned}
\end{equation}
and the second derivative
\begin{equation}\label{eq: Delta 2. derivative}
\begin{aligned}
  \frac{\d^2 \Delta}{\d \mu^2} (\mu) &=  H (2H-1) \mu^{-2H-2} \int_0^{\mu T}\e^{-s}s^{2H-1}\big(2 H \mu T - (2H+1)s\big) \d s\\
  &\begin{cases}
     < 0 \,\dots\, H < 1/2 \\
     = 0 \,\dots\, H = 1/2 \\
     > 0 \,\dots\, H > 1/2 \\
   \end{cases}, \quad \forall \mu > 0,
\end{aligned}
\end{equation}
in view of
\begin{equation*}
\begin{aligned}
   & \int_0^{\mu T}\e^{-s}s^{2H-1}\big(2 H \mu T - (2H+1)s\big) \d s  \\
    & \quad > \int_0^{\mu T 2H / (2H+1)}\e^{-\mu T 2H / (2H+1)}s^{2H-1}\big(2 H \mu T - (2H+1)s\big) \d s \\
    & \quad\quad +  \int_{\mu T 2H / (2H+1)}^{\mu T} \e^{-\mu T 2H / (2H+1)}s^{2H-1}\big(2 H \mu T - (2H+1)s\big) \d s = 0.
\end{aligned}
\end{equation*}
Derivatives of $R$ are immediate consequence of \eqref{eq: Delta 1. derivative} and \eqref{eq: Delta 2. derivative}
\begin{equation}\label{eq: R 1. derivative}
\begin{aligned}
  \frac{\d R_N}{\d \Lambda} (\Lambda) &=  \frac{\sum_{k=1}^{N}  \alpha^2_k \frac{\d \Delta}{\d \mu}(\alpha_k \Lambda + \beta_k)}{\sum_{k=1}^{N}\alpha_k^2 \int_0^T x^2_k(t)\d t}
  \begin{cases}
     > 0 \,\dots\, H < 1/2 \\
     = 0 \,\dots\, H = 1/2 \\
     < 0 \,\dots\, H > 1/2 \\
   \end{cases}, \quad \forall \Lambda > 0,
\end{aligned}
\end{equation}
and
\begin{equation}
\begin{aligned}\nonumber
  \frac{\d^2 R_N}{\d \Lambda^2} (\Lambda) &=  \frac{\sum_{k=1}^{N}  \alpha^3_k \frac{\d^2 \Delta}{\d \mu^2}(\alpha_k \Lambda + \beta_k)}{\sum_{k=1}^{N}\alpha_k^2 \int_0^T x^2_k(t)\d t}
  \begin{cases}
     < 0 \,\dots\, H < 1/2 \\
     = 0 \,\dots\, H = 1/2 \\
     > 0 \,\dots\, H > 1/2 \\
   \end{cases}, \quad \forall \Lambda > 0.
\end{aligned}
\end{equation}
Moreover, since
\[
\lim_{\mu \to 0_+}\Delta(\mu) = \frac{1}{2}T^{2H},
\]
we obtain
\begin{equation}\label{R lim to 0}
    \lim_{\Lambda \to 0_+} R_N(\Lambda) =: R_N(0) \in \mathbb{R}.
\end{equation}
Direct calculations yield
\begin{equation}
    \lim_{\Lambda \to \infty} R_N(\Lambda) =
       \begin{cases}
        \infty \,\dots\, H < 1/2 \\
        R_N^\infty \in \mathbb{R} \,\dots\, H \geq 1/2
   \end{cases}\nonumber
\end{equation}
and
\begin{equation}
    \lim_{\Lambda \to \infty} R_N(\Lambda) - \Lambda = -\infty, \quad \forall H \in (0,1).\nonumber
\end{equation}
The above properties of $R_N$ lead us to the following conclusion:

\begin{Theorem}\label{thm: existence and uniqueness of modified lambda}
Fix $N$ and consider the function $R_N$ defined in \eqref{eq: RHS function} with $R_N(0)$ defined in \eqref{R lim to 0} and let $H \in (0,1)$. Sufficient condition for the existence and uniqueness of the positive solution to equation \eqref{equation modified LSE}, which can be rephrased as $R_N(\Lambda) = \Lambda$, is
\begin{equation}
    R_N(0)>0.\nonumber
\end{equation}
If $H\geq 1/2$, this condition is also necessary.
\end{Theorem}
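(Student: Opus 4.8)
The plan is to recast the fixed-point equation $R_N(\Lambda)=\Lambda$ as a root-finding problem for the auxiliary function $G_N(\Lambda):=R_N(\Lambda)-\Lambda$, and to read off both existence and the number of roots from the sign of $G_N$ at the endpoints together with the monotonicity and convexity information already recorded in \eqref{eq: R 1. derivative} and in the formula for the second derivative of $R_N$. By \eqref{R lim to 0} the function $G_N$ extends continuously to $\Lambda=0$ with value $G_N(0)=R_N(0)$, and by the established limit $\lim_{\Lambda\to\infty}(R_N(\Lambda)-\Lambda)=-\infty$ we have $G_N(\Lambda)\to-\infty$ as $\Lambda\to\infty$. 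The positive solutions of \eqref{equation modified LSE} are precisely the positive zeros of $G_N$, so everything reduces to counting those zeros.

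For the sufficiency of $R_N(0)>0$ I would first obtain existence directly from the intermediate value theorem: since $G_N(0)=R_N(0)>0$ while $G_N(\Lambda)\to-\infty$, continuity forces $G_N$ to vanish somewhere on $(0,\infty)$. For uniqueness the argument splits according to $H$. When $H\ge 1/2$, \eqref{eq: R 1. derivative} gives $\frac{\d R_N}{\d\Lambda}\le 0$, hence $G_N'=\frac{\d R_N}{\d\Lambda}-1\le-1<0$; thus $G_N$ is strictly decreasing and can have at most one zero, which combined with existence yields exactly one.

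The regime $H<1/2$ is the step requiring the most care, since there $R_N$ is increasing and $G_N$ need not be monotone, so the simple strict-decrease argument is unavailable. Here I would instead invoke the sign of the second derivative computed above, which is negative for $H<1/2$: thus $R_N$ is concave, and subtracting the linear term $\Lambda$ leaves $G_N$ concave as well. A concave function is unimodal, since $G_N'$ is strictly decreasing, so $G_N$ is either decreasing throughout or first increases to a single maximum and then decreases. Because $G_N(0)=R_N(0)>0$, any initial increasing portion keeps $G_N$ strictly positive, so every zero must lie on the terminal decreasing branch along which $G_N\to-\infty$; hence there is exactly one positive zero. This concavity argument is the crux of the proof, as it supplies the uniqueness that monotonicity alone cannot give when $H<1/2$.

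Finally, for the necessity claim when $H\ge 1/2$ I would argue by contraposition. If $R_N(0)\le 0$, then, using once more that $G_N$ is strictly decreasing in this regime, we get $G_N(\Lambda)<\lim_{\Lambda\to 0_+}G_N(\Lambda)=R_N(0)\le 0$ for every $\Lambda>0$, so $G_N$ has no positive zero and \eqref{equation modified LSE} admits no positive solution. Consequently, for $H\ge 1/2$ the condition $R_N(0)>0$ is indeed necessary as well as sufficient, completing the statement.
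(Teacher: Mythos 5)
Your proof is correct and follows essentially the same route as the paper: the paper states the theorem as an immediate consequence of the monotonicity, concavity/convexity, and limit properties of $R_N$ established in \eqref{eq: R 1. derivative}--\eqref{R lim to 0}, leaving the case analysis implicit. Your write-up simply fills in those details (IVT for existence, strict decrease of $R_N(\Lambda)-\Lambda$ when $H\geq 1/2$ for uniqueness and necessity, and concavity/unimodality when $H<1/2$), all of which is sound.
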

Note that the condition $R_N(0) > 0$ can be easily verified from the data by direct evaluation of $R_N$ at $\Lambda = 0$ (if $\beta_k$ are positive) or by calculating the limit for $\Lambda \to 0_{+}$ (if $\beta_k$ are zero). We are now in a position to define the pathwise LSE correctly:
\begin{equation}\label{definition of modified LSE}
\tilde{\lambda}_N :=
  \begin{cases}
     \text{positive solution to \eqref{equation modified LSE}, if it exists and is unique};\\
      0, \text{if there is no positive solution to \eqref{equation modified LSE}}; \\
     \text{the greater solution to \eqref{equation modified LSE}, if there are two positive solutions}. \\
   \end{cases}
\end{equation}

\begin{Remark}
The choice of the greater solution in the third case ensures that $\frac{\d R_N}{\d \Lambda}(\tilde{\lambda}_N) < 1$, which corresponds to the limiting behavior of $\frac{\d R_N}{\d \Lambda}$ in the neighborhood of the true value $\lambda$, detailed in Lemma \ref{lem: R_N derivative uniform conv}
\end{Remark}

\begin{Remark}
Monotonicity and convexity (if $H>1/2$) or concavity (if $H<1/2$), respectively, ensure that the Newton-Raphson numerical method is well suited for numerical approximation of $\tilde{\lambda}_N$.
\end{Remark}


\section{Strong consistency} \label{sect: Strong consistency}
\subsection{Strong consistency of theoretical LSE}\label{subsect: Strong consistency of theoretical LSE}
We start with strong consistency of the theoretical LSE $\hat{\lambda}_N$ defined in \eqref{LSE_theor1}, because it will be helpful to show strong consistency of the pathwise LSE in the sequel. For simplicity, we assume throughout this section that
\begin{equation}
	x_k(0) = 0, \quad k=1,2,\ldots\nonumber
\end{equation}

The self-similarity of a fBm enables us to study the effect of different drifts $\mu_k$ on the distributions of the processes $x_k$. Its combination with the (long-span) asymptotic behavior of a real-valued fractional Ornstein-Uhlenbeck process, studied in \cite{HNZ17},  forms the strategy of the proof of the strong consistency (in space) of $\hat{\lambda}_N$.

Using integration-by-parts in the formula~\eqref{OU} it is possible to show that the solutions $\{x_k(t),t\geq 0\}$ can be expressed as
\begin{equation}\label{OUD}
x_k(t)=\left(B^H_k(t)-\mu_k\e^{-\mu_k t}\int_0^t \e^{\mu_k s}B^H_k(s)\d s\right),\ t\geq 0,
\end{equation}
for any $k\in\N$. This can also be verified by direct substitution of~\eqref{OUD} into the integral form of equation~\eqref{SDE1}.

Let $\{\tilde x_k(t),t\geq 0\}$ be the solution to the equation with unit drift
\begin{equation}\label{rceK}
\begin{array}{rcl}
{\rm d} \tilde x_k(t)&=& -\tilde x_k(t)\d t+ \d B^H_k(t),\quad t>0,\ k\in\N,\\
\tilde x_k(0)&=&0,
\end{array}
\end{equation}
given by the formula
\begin{equation}
\tilde x_k(t)=\int_0^t \e^{-(t-s)} \d B^H_k(t),\quad t\geq 0,\nonumber
\end{equation}
or
\begin{equation}\label{OUDK}
\tilde x_k(t)= \left(B^H_k(t)-\int_0^t \e^{-(t-s)}B^H_k(s)\d s\right),\quad t\geq 0,
\end{equation}
for any $k\in\N$.

Using the self-similarity of a fractional Brownian motion in the expression~\eqref{OUDK} we get the equality of distributions
\begin{equation}\label{selfS}
\{x_k(t),t\geq 0\}\stackrel{d}{=} \left\{\frac{1}{\mu_k^H}\tilde x_k(\mu_k\,t),t\geq 0\right\}
\end{equation}
for any $k\in\N$.

Moreover, let $\{y(t),t\geq 0\}$ be the stationary and ergodic solution to~\eqref{rceK} (for $k=1$) given by a formula
\[y(t)=\e^{-t}y_0+\int_0^t \e^{-(t-s)} \d B^H_1(t),\ t\geq 0.\]
Then 
\[\E y(t) = 0,\ {\rm var}\, \big(y(t)\big)= H\Gamma(2H)\]
for any $t\geq 0$ and
\begin{equation}\label{erg}
\frac{1}{T}\int_0^T y^2(t)\d t \xrightarrow[T\rightarrow\infty]{} H\Gamma(2H) \quad \hbox{a.s. and in }L^1.
\end{equation}

\begin{Lemma}\label{Ekonv}
The following convergence
\[\E\left[\frac{1}{T}\int_0^T \big(\tilde x^2_1(t)-y^2(t)\big)\d t\right] \xrightarrow[T\rightarrow\infty]{} 0\]
holds.
\end{Lemma}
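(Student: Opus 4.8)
The plan is to exploit that $\tilde{x}_1$ and $y$ are driven by the \emph{same} fractional Brownian motion $B^H_1$ and differ only in the initial condition. Comparing the integral representation of $\tilde{x}_1$ with the formula for $y$, both contain the identical Wiener integral $\int_0^t \e^{-(t-s)}\d B^H_1(s)$, whence $y(t)=\tilde{x}_1(t)+\e^{-t}y_0$ for every $t\geq 0$. This common-noise decomposition is the crux of the argument: it turns the difference of squares into the explicit transient expression $\tilde{x}_1^2(t)-y^2(t)=-2\e^{-t}y_0\,\tilde{x}_1(t)-\e^{-2t}y_0^2$, reducing the lemma to controlling two terms that are each exponentially damped in $t$.

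Next I would take expectations. Using $\E y_0^2=\var\big(y(0)\big)=H\Gamma(2H)$, recorded above for the stationary solution, one gets $\E\big[\tilde{x}_1^2(t)-y^2(t)\big]=-2\e^{-t}\,\E\big[y_0\,\tilde{x}_1(t)\big]-\e^{-2t}H\Gamma(2H)$. The second summand is already harmless. For the first I would apply the Cauchy--Schwarz inequality, $\big|\E[y_0\,\tilde{x}_1(t)]\big|\leq \sqrt{H\Gamma(2H)}\,\|\tilde{x}_1(t)\|_{L^2}$, and bound $\|\tilde{x}_1(t)\|_{L^2}$ uniformly in $t$ by reusing the decomposition: from $\tilde{x}_1(t)=y(t)-\e^{-t}y_0$ and Minkowski's inequality, $\|\tilde{x}_1(t)\|_{L^2}\leq \|y(t)\|_{L^2}+\e^{-t}\|y_0\|_{L^2}=\sqrt{H\Gamma(2H)}\,(1+\e^{-t})\leq 2\sqrt{H\Gamma(2H)}$. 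Hence $\big|\E[y_0\,\tilde{x}_1(t)]\big|\leq 2H\Gamma(2H)$ for all $t\geq 0$.

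Finally I would integrate over $[0,T]$ and divide by $T$. Since $\int_0^T \e^{-t}\d t\leq 1$ and $\int_0^T \e^{-2t}\d t\leq \tfrac12$, the two contributions are bounded by a constant $C$ depending only on $H\Gamma(2H)$, so that
\[
\left|\frac{1}{T}\int_0^T \E\big[\tilde{x}_1^2(t)-y^2(t)\big]\d t\right|\leq \frac{C}{T}\xrightarrow[T\to\infty]{}0,
\]
which is the assertion. I do not expect a genuine technical obstacle here: the finiteness of the stationary variance $H\Gamma(2H)$ holds for every $H\in(0,1)$, and all other manipulations are elementary. The single decisive step is spotting the common-noise identity $y(t)=\tilde{x}_1(t)+\e^{-t}y_0$; once this is in place the transient terms vanish after time-averaging purely because of their exponential decay.
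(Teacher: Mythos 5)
Your proof is correct, and it rests on the same key observation as the paper's: the common-noise identity $y(t)=\tilde x_1(t)+\e^{-t}y_0$, which reduces the difference of squares to exponentially damped transient terms. Where you genuinely diverge is in the treatment of the cross-term. The paper expands $\tilde x_1^2-y^2=2y(\tilde x_1-y)+(\tilde x_1-y)^2$, so its cross-term is $-2\e^{-t}\E\big[y_0\,y(t)\big]$; it then splits the time integral at $t=1$ and controls the tail by invoking two external results, namely the autocovariance asymptotics $\E\big[y_0\,y(t)\big]=H(2H-1)t^{2H-2}+O(t^{2H-4})$ of the stationary fractional Ornstein--Uhlenbeck process (\cite{ChKM03}, Theorem 2.3) combined with a Ces\`aro-type convergence lemma (\cite{KM19}, Lemma 3.1). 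You expand around $\tilde x_1$ instead, getting the cross-term $-2\e^{-t}\E\big[y_0\,\tilde x_1(t)\big]$, which you bound \emph{uniformly} in $t$ by Cauchy--Schwarz and Minkowski using only the stationarity of $y$ (i.e. $\E y^2(t)=H\Gamma(2H)$); the prefactor $\e^{-t}$ then makes the time average vanish at the explicit rate $O(1/T)$. Your route is thus entirely self-contained and elementary, needs no facts about the decay of the fOU autocovariance, and gives a quantitative rate for free. (Indeed, the same uniform bound $\big|\E[y_0\,y(t)]\big|\le \E y_0^2=H\Gamma(2H)$, available by Cauchy--Schwarz and stationarity, would have simplified the paper's own version as well, since the factor $\e^{-t}$ already makes the integral $\int_0^T\e^{-t}\d t$ bounded; the asymptotics cited there are stronger than what the lemma requires.)
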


\begin{proof}
We have
\begin{align*}
&\E\left[\frac{1}{T}\int_0^T \big(\tilde x^2_1(t)-y^2(t)\big)\d t\right] \\&\qquad=\E\left[\frac{1}{T}\Big( \int_0^T 2y(t)\big(\tilde x_1(t)-y(t)\big) \d t + \int_0^T \big(\tilde x_1(t)-y(t)\big)^2 \d t\Big )\right] \\&\qquad= \frac{1}{T}\left (\int_0^T -2\e^{-t} \E\big[y_0 \,y(t)\big]\d t + \int_0^T \e^{-2t}\E[y_0^2] \d t\right ) \\&\qquad = \frac{1}{T}\int_0^{1} -2\e^{-t} \E\big[y_0 \,y(t)\big]\d t + \frac{1}{T}\int_{1}^T -2\e^{-t} \E\big[y_0 \,y(t)\big]\d t\\&\qquad\qquad+ H\Gamma(2H)\frac{1}{T}\int_0^T \e^{-2t}\d t.
\end{align*}
Obviously, the first and third term tend to zero as $T$ goes to infinity. Also the second term tends to zero due to~\cite{KM19}, Lemma~3.1, and the limit properties of 
\[\E \big[y_0 \,y(t)\big]=\frac{1}{2} 2H(2H-1)t^{2H-2}+O(t^{2H-4}),\ t\rightarrow \infty,\]
for $H\neq 1/2$ (see \cite{ChKM03}, Theorem 2.3) and
\[\E \big[y_0 \,y(t)\big]=\frac{1}{2} \e^{-t},\ t\geq 0,\] for the Wiener case $H=1/2$.
\end{proof}

Define the weak asymptotic equivalence $\asymp$ for sequences $\{a_N\}_{N\in\N}, \{b_N\}_{N\in\N}$ by the relation
\begin{equation}\label{def: asymp}
\biggl(a_N\asymp b_N\biggr) \quad \equiv \quad \biggl(\exists\, 0<l<L<\infty \text{ such that } l\, b_N\leq a_N\leq L\, b_N, \forall N\in\N\biggr).
\end{equation}
Note that this relation is weaker than the equivalence relation $\sim$ defined in \eqref{def: sim}.

Recall the formula \eqref{LSE_theor2} (suitable for probabilistic analysis) and denote 
\[C_N= \sum_{k=1}^N \alpha_k\int_0^T x_k(t)\d B^H_k(t),\]
and
\[D_N= \sum_{k=1}^N \alpha_k^2\int_0^T x_k^2(t)\d t,\]
so that 
\[\hat \lambda_N = \lambda -\frac{C_N}{D_N}.\]

For the needs of Lemma~\ref{Lem: a.s. convergence} we explore the limit behavior of the variance of $\frac{C_N}{\E D_N}$ and $\frac{D_N}{\E D_N}$. Denote for a fixed $T$ 
\[T_k:= \mu_k T,\ k\in\N.\]

\begin{Proposition}\label{str_h}
The expectations of $C_N$ and $D_N$ satisfy
\[ \E C_N=0\quad\hbox{and}\quad \E D_N\asymp \sum_{k=1}^N \frac{\alpha_k^2}{\mu_k^{2H}}.\]
\end{Proposition}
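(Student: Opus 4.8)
The plan is to treat the two claims separately. For $\E C_N=0$ I would invoke the defining property of the Skorokhod (divergence) integral: as the adjoint of the Malliavin derivative, $\delta$ satisfies $\E[\delta(u)]=0$ for every $u$ in its domain. Each $x_k$ lies in the first Wiener chaos and the integrand is Skorokhod-integrable, so every term $\E\bigl[\int_0^T x_k(t)\,\d B^H_k(t)\bigr]$ vanishes, and $\E C_N$ is a finite linear combination of these zeros. This zero-mean property is precisely the reason the Skorokhod integral was chosen over the Stratonovich one, as noted in the remark following \eqref{LSE_theor2}.

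For the second claim I would first write $\E D_N=\sum_{k=1}^N \alpha_k^2\int_0^T \E[x_k^2(t)]\,\d t$ and then exploit the self-similarity identity \eqref{selfS}. From $x_k(t)\stackrel{d}{=}\mu_k^{-H}\tilde x_k(\mu_k t)$ one gets $\E[x_k^2(t)]=\mu_k^{-2H}\E[\tilde x_k^2(\mu_k t)]$, and since every $\tilde x_k$ solves the unit-drift equation \eqref{rceK} driven by an fBm of the same Hurst index, $\E[\tilde x_k^2(u)]=\E[\tilde x_1^2(u)]$ for all $k$. After the change of variables $u=\mu_k t$ this yields
\[
\E D_N = T\sum_{k=1}^N \frac{\alpha_k^2}{\mu_k^{2H}}\,g(T_k),\qquad g(s):=\frac{1}{s}\int_0^s \E[\tilde x_1^2(u)]\,\d u,
\]
with $T_k=\mu_k T$. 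The entire statement thus reduces to showing that $g(T_k)$ stays between two strictly positive constants uniformly in $k$.

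The asymptotic value of $g$ is already at hand: combining Lemma \ref{Ekonv} with the $L^1$-convergence in \eqref{erg} gives $\E\bigl[\frac{1}{s}\int_0^s \tilde x_1^2(u)\,\d u\bigr]\to H\Gamma(2H)>0$ by Fubini, i.e. $g(s)\to H\Gamma(2H)$ as $s\to\infty$. Since \eqref{lim mu_k} forces $T_k=\mu_k T\to\infty$, only finitely many indices $k$ produce small arguments $T_k$. For those, continuity and strict positivity of $g$ on any compact subset of $(0,\infty)$ — the integrand $\E[\tilde x_1^2(u)]$ being positive for $u>0$ — keep $g(T_k)$ between two positive constants, while for the cofinitely many remaining indices the convergence $g(T_k)\to H\Gamma(2H)$ does the same. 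Taking the minimum and maximum over the two regimes produces constants $0<l\le L<\infty$ with $l\le T\,g(T_k)\le L$ for every $k$, which upon summation is exactly the relation $\asymp$ of \eqref{def: asymp}.

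The main obstacle is this final uniformity step: the convergence of $g$ is only asymptotic in $k$, and in fact $g(s)\to 0$ as $s\to 0_+$ (because $\tilde x_1(0)=0$), so a naive bound would break down for small arguments. The resolution hinges crucially on \eqref{lim mu_k}: because $\mu_k\to\infty$, the problematic small-$T_k$ regime contains only finitely many terms and can be absorbed into the constants. One should also record that $g$ is genuinely bounded on $[s_0,\infty)$ rather than merely convergent, but this is immediate from its continuity together with the finite limit $H\Gamma(2H)$.
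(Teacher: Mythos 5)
Your proposal is correct and follows essentially the same route as the paper: zero mean of the Skorokhod integral for $C_N$, then self-similarity \eqref{selfS} plus the substitution $u=\mu_k t$ to reduce $\E D_N$ to $T\sum_k \frac{\alpha_k^2}{\mu_k^{2H}}\,g(T_k)$, with Lemma \ref{Ekonv} giving $g(T_k)\to H\Gamma(2H)>0$. The only difference is one of detail: the paper compresses the final uniformity step into the phrase that the sequence $\bigl\{\frac{1}{T_k}\int_0^{T_k}\E\tilde x_1^2(t)\,\d t\bigr\}_k$ is ``positive and bounded,'' whereas you spell out why the finitely many small-$T_k$ indices (guaranteed by \eqref{lim mu_k}) cannot ruin the lower bound.
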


\begin{proof}
The first part is obvious. By Lemma~\ref{Ekonv} 
\[\E\left[\frac{1}{T}\int_0^{T} \tilde x^2_1(t)\d t\right] \xrightarrow[T\rightarrow\infty]{} H\Gamma(2H), \]
thus the sequence $\left\{1/T_k \int_0^{T_k} \E \tilde x^2_1(t)\d t\right\}_{k \in \N}$ is positive and bounded. Using~\eqref{selfS} and substitution we get
\begin{align*}
\E D_N &= \sum_{k=1}^N \alpha_k^2\int_0^T \E x_k^2(t)\d t= \sum_{k=1}^N \frac{\alpha_k^2}{\mu_k^{2H}}\int_0^T \E\tilde x_k^2(\mu_k t)\d t \\&=  \sum_{k=1}^N \frac{\alpha_k^2}{\mu_k^{2H}}\int_0^T  \E\tilde x_1^2(\mu_k t)\d t = \sum_{k=1}^N \frac{\alpha_k^2}{\mu_k^{2H}}T \frac{1}{T_k}\int_0^{T_k}  \E\tilde x_1^2(t)\d t \asymp \sum_{k=1}^N \frac{\alpha_k^2}{\mu_k^{2H}}.
\end{align*}
\end{proof}

To study the time asymptotics of $\var \Big(\int_0^T\tilde{x}_1^2(t)\d t\Big)$, we utilize the results from \cite{HNZ17}. Denote
\[F_T:=\int_0^T \tilde{x}_1(t) \d B^H_1(t) = \int_0^T\Big(\int_0^t \e^{-(t-s)} \d B^H_1(s)\Big)\d B^H_1(t).\]
By (3.9) in~\cite{HNZ17} this equals

\begin{equation}\label{HNZrovn}
F_T=\left[\Big(\frac{\tilde{x}^2_1(T)}{2}-\E \frac{\tilde{x}^2_1(T)}{2}\Big) + \int_0^T \big(\tilde{x}^2_1(t)- \E \tilde{x}^2_1(t)\big)\d t\right].
\end{equation}

Further define
\renewcommand{\arraystretch}{1.3}
$$Q_H(T):=\left\{
\begin{array}{ccl}
\frac{1}{T} & , & H<\frac{3}{4},\\
\frac{1}{T\log T} & , & H=\frac{3}{4},\\
\frac{1}{T^{4H-2}} & , & H>\frac{3}{4}.
\end{array}\right.$$
\renewcommand{\arraystretch}{1}
Then by \cite{HNZ17}, Lemma 17, there exists a constant $0<C_H<\infty$ such that
\begin{equation}\label{HNZkonv}
\E\big[Q_H(T)F^2_T\big]\xrightarrow[T\rightarrow\infty]{} C_H.
\end{equation}
As a consequence, we obtain:

\begin{Lemma}\label{konv_det}
$Q_H(T)\,\var \Big(\int_0^T\tilde{x}_1^2(t)\d t\Big) \xrightarrow[T\rightarrow\infty]{} C_H$.
\end{Lemma}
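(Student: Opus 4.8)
The plan is to read off the variance of the Riemann integral directly from the identity \eqref{HNZrovn}, which expresses the second-chaos variable $F_T$ as the sum of a boundary term and precisely the centered integral whose variance we are after. Writing $I_T = \int_0^T\big(\tilde x_1^2(t) - \E\tilde x_1^2(t)\big)\d t$ and $M_T = \tfrac12\big(\tilde x_1^2(T) - \E\tilde x_1^2(T)\big)$, formula \eqref{HNZrovn} reads $F_T = M_T + I_T$, and by construction $I_T$ is the centering of $\int_0^T \tilde x_1^2(t)\d t$, so that $\var\big(\int_0^T \tilde x_1^2(t)\d t\big) = \E[I_T^2]$. Since \eqref{HNZkonv} already pins down the asymptotics of $\E[F_T^2]$, the entire task reduces to showing that the boundary contribution $M_T$ is asymptotically negligible once multiplied by $Q_H(T)$.

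First I would record that $\E[M_T^2] = \tfrac14\var\big(\tilde x_1^2(T)\big)$ stays bounded in $T$. Indeed $\tilde x_1(T)$ is a centered Gaussian variable, hence $\var\big(\tilde x_1^2(T)\big) = 2\big(\E\tilde x_1^2(T)\big)^2$, and $\E\tilde x_1^2(T)$ is a continuous function of $T$ converging to the stationary value $H\Gamma(2H)$ (cf.\ the discussion around \eqref{erg} and Lemma \ref{Ekonv}), hence bounded. Since $Q_H(T)\to 0$ in each of the three regimes defining $Q_H$ (note $4H-2>1$ when $H>3/4$), it follows that $\sqrt{Q_H(T)}\,\|M_T\|_{L^2}\to 0$.

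The key step is to avoid estimating the cross term $\E[M_T I_T]$ directly, which is delicate because $\|I_T\|_{L^2}$ itself diverges, and instead to argue at the level of $L^2$-norms. By the reverse triangle inequality applied to $F_T = M_T + I_T$,
\[
\big|\,\|I_T\|_{L^2} - \|F_T\|_{L^2}\,\big| \le \|M_T\|_{L^2}.
\]
Multiplying by $\sqrt{Q_H(T)}$ and invoking the previous paragraph, the right-hand side tends to $0$, while $\sqrt{Q_H(T)}\,\|F_T\|_{L^2} = \sqrt{Q_H(T)\,\E[F_T^2]} \to \sqrt{C_H}$ by \eqref{HNZkonv}. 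Consequently $\sqrt{Q_H(T)\,\E[I_T^2]} \to \sqrt{C_H}$, and squaring gives $Q_H(T)\,\var\big(\int_0^T \tilde x_1^2(t)\d t\big) = Q_H(T)\,\E[I_T^2] \to C_H$.

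The only mild obstacle I anticipate is the uniform-in-$T$ boundedness of $\E\tilde x_1^2(T)$ (equivalently of $\E[M_T^2]$): this is the point where one must use convergence of the Ornstein--Uhlenbeck variance to its stationary limit, rather than the merely Cesàro/ergodic statement of Lemma \ref{Ekonv}. Everything else is a soft $L^2$-norm manipulation that transfers the known asymptotics of $\E[F_T^2]$ to $\E[I_T^2]$ through the negligibility of the boundary term.
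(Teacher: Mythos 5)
Your proof is correct and follows essentially the same route as the paper: both use the decomposition $F_T = M_T + I_T$ from \eqref{HNZrovn}, invoke \eqref{HNZkonv} for the asymptotics of $\E[F_T^2]$, and dispose of the boundary term $M_T$ because its variance stays bounded while $Q_H(T)\to 0$. The only differences are minor: where you bound $\var\big(\tilde x_1^2(T)\big)$ via the Gaussian identity $\var(Z^2)=2\big(\E Z^2\big)^2$ together with convergence of $\E \tilde x_1^2(T)$, the paper instead squeezes $\var\big(\tilde x_1^2(T)\big)$ between fourth-moment bounds built from the stationary solution $y$ to get the exact limit $\var y_0^2$ (more than is needed), and your explicit reverse-triangle-inequality step in $L^2$ is precisely the argument the paper leaves implicit in its final sentence.
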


\begin{proof} In the first step we show that $\var \big(\tilde{x}_1^2(T)\big)\xrightarrow[T\rightarrow\infty]{} \var y_0^2$.

\noindent
Since $\tilde{x}_1(T)= y(T) - \e^{-T} y_0$, using the triangle inequality we obtain
\begin{align*}
\left(\sqrt{\E y^2(T)} - \e^{-T}\sqrt{\E y_0^2}\right)^2 \leq &\E \tilde{x}_1^2(T)\leq \left(\sqrt{\E y^2(T)} + \e^{-T}\sqrt{\E y_0^2}\right)^2\\
H\Gamma(2H) (1 - \e^{-T})^2 \leq &\E \tilde{x}_1^2(T)\leq H\Gamma(2H)(1 + \e^{-T})^2.
\end{align*}
Similarly,
\[\E y_0^4 \,(1 - \e^{-T})^4 \leq \E \tilde{x}_1^4(T)\leq \E y_0^4 \,(1 + \e^{-T})^4.\]
Therefore
\begin{align*}
&\overbrace{\E y_0^4 \,(1 - \e^{\!-T})^4 - \big(H\Gamma(2H) (1 + \e^{\!-T})^2\big)^2}^{\xrightarrow[T\rightarrow\infty]{}\,\var y_0^2} \leq \var \tilde{x}_1^2(T)\\&\hspace{5cm}\leq \underbrace{\E y_0^4 \,(1 + \e^{\!-T})^4 - \big(H\Gamma(2H) (1 - \e^{-T})^2\big)^2}_{\xrightarrow[T\rightarrow\infty]{}\,\var y_0^2}.
\end{align*}

\noindent
In the second step we apply \eqref{HNZkonv} and the first step to equality \eqref{HNZrovn} and yield the statement.
\end{proof}

\renewcommand{\arraystretch}{2}
\begin{Proposition}\label{rozptyl}
The variances of $C_N$ and $D_N$ satisfy

\[
\left\{
\begin{array}{lllll}
H<\frac{3}{4} & \Rightarrow & \var C_N\asymp \sum_{k=1}^N \dfrac{\alpha_k^2}{\mu_k^{4H-1}} & , & \var D_N\asymp \sum_{k=1}^N \dfrac{\alpha_k^4}{\mu_k^{4H+1}},\\
H=\frac{3}{4} & \Rightarrow & \var C_N\asymp \sum_{k=1}^N \dfrac{\alpha_k^2}{\mu_k^2}\log T_k & , & \var D_N\asymp \sum_{k=1}^N \dfrac{\alpha_k^4}{\mu_k^4}\log T_k,\\
H>\frac{3}{4} & \Rightarrow & \var C_N\asymp \sum_{k=1}^N \dfrac{\alpha_k^2}{\mu_k^2} & , & \var D_N\asymp \sum_{k=1}^N \dfrac{\alpha_k^4}{\mu_k^4}.
\end{array}
\right.
\]

\end{Proposition}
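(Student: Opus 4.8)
The plan is to exploit the mutual independence of the Fourier modes together with the self-similar scaling \eqref{selfS} and the time-asymptotic facts already recorded for the unit-drift process $\tilde x_1$, thereby reducing each variance to a sum of rescaled variances of functionals of $\tilde x_1$. Since $\{B^H_k\}_k$ are mutually independent and each summand $\alpha_k\int_0^T x_k\d B^H_k$ (resp. $\alpha_k^2\int_0^T x_k^2\d t$) is a functional of $B^H_k$ alone, the summands are independent, hence uncorrelated, and the variances split:
\[\var C_N = \sum_{k=1}^N \alpha_k^2\,\var\Big(\int_0^T x_k(t)\d B^H_k(t)\Big),\qquad \var D_N = \sum_{k=1}^N \alpha_k^4\,\var\Big(\int_0^T x_k^2(t)\d t\Big).\]
It then suffices to find the order of each individual variance as $k\to\infty$ (equivalently $T_k=\mu_k T\to\infty$, by \eqref{lim mu_k}) and to sum.

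For $D_N$ the scaling \eqref{selfS} and the substitution $u=\mu_k t$ give $\int_0^T x_k^2(t)\d t \stackrel{d}{=} \mu_k^{-2H-1}\int_0^{T_k}\tilde x_1^2(u)\d u$, so that $\var\big(\int_0^T x_k^2\d t\big) = \mu_k^{-4H-2}\var\big(\int_0^{T_k}\tilde x_1^2\d u\big)$. By Lemma \ref{konv_det} the latter variance is of order $Q_H(T_k)^{-1}$, and inserting the three cases of $Q_H$ with $T_k=\mu_k T$ yields the stated orders for $\var D_N$.

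For $C_N$ the pivotal step is a distributional identity for the Skorokhod integral. Generalizing \eqref{HNZrovn} to an arbitrary drift $\mu_k$ (via the Stratonovich-to-Skorokhod compensation of Section \ref{sect:Derivation of the estimator}, together with $x_k(0)=0$ and the vanishing Skorokhod expectation) one obtains
\[\int_0^T x_k(t)\d B^H_k(t) = \Big(\tfrac{1}{2}x_k^2(T) - \tfrac{1}{2}\E x_k^2(T)\Big) + \mu_k\Big(\int_0^T x_k^2(t)\d t - \E\int_0^T x_k^2(t)\d t\Big).\]
Applying \eqref{selfS} jointly (at the level of processes) to both terms, the common factor $\mu_k^{-2H}$ factors out and the bracket becomes exactly $F_{T_k}$, whence $\int_0^T x_k\d B^H_k \stackrel{d}{=}\mu_k^{-2H}F_{T_k}$ and therefore $\var\big(\int_0^T x_k\d B^H_k\big) = \mu_k^{-4H}\,\E F_{T_k}^2$. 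Now \eqref{HNZkonv} gives $\E F_{T_k}^2$ of order $Q_H(T_k)^{-1}$, and substituting the three cases of $Q_H$ produces the claimed orders for $\var C_N$.

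Two points need care, and the second is the main obstacle. First, the statements of Lemma \ref{konv_det} and \eqref{HNZkonv} are only asymptotic (as $T_k\to\infty$); to upgrade them to the two-sided relation $\asymp$ valid for every $k\in\N$ I will use that each of the sequences $\{Q_H(T_k)\var(\int_0^{T_k}\tilde x_1^2\d u)\}_k$ and $\{Q_H(T_k)\E F_{T_k}^2\}_k$ is strictly positive and finite for each finite $T_k$ and converges to the positive finite limit $C_H$, hence is bounded above and below by positive constants; these uniform constants then survive the summation, since $a_k\asymp b_k$ with common constants implies $\sum_k a_k\asymp\sum_k b_k$. The harder point is establishing the process-level distributional identity $\int_0^T x_k\d B^H_k \stackrel{d}{=}\mu_k^{-2H}F_{T_k}$: this requires justifying the generalized form of \eqref{HNZrovn} for arbitrary drift and then applying \eqref{selfS} simultaneously to the boundary term and to the time integral with the correct exponents, so that the two contributions share the single scaling factor $\mu_k^{-2H}$.
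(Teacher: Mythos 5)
Your proposal is correct and takes essentially the same route as the paper's proof: independence splits the variances into sums, the self-similarity \eqref{selfS} reduces each summand to a functional of $\tilde x_1$ on $[0,T_k]$, and the key distributional identity $\int_0^T x_k(t) \d B^H_k(t) \stackrel{d}{=} \mu_k^{-2H} F_{T_k}$ together with \eqref{HNZkonv} and Lemma \ref{konv_det} yields the stated orders. The only cosmetic differences are that the paper obtains that identity by citing (3.9) of \cite{HNZ17} directly rather than re-deriving it via the Stratonovich--Skorokhod compensation, and your explicit two-sided bounding of the positive convergent sequences is a slightly more careful rendering of the paper's boundedness step.
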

\renewcommand{\arraystretch}{1}

\begin{proof} Using again (3.9) in \cite{HNZ17}, \eqref{selfS} and the fact that $\{\tilde x_k(t),t\geq 0\}$ have the same law for all $k\in \N$, we get equality of laws:
\[\int_0^T x_k(t) \d B^H_k(t) \stackrel{d}{=} \dfrac{1}{\mu_k^{2H}}\int_0^{T_k} \tilde x_1(t) \d B^H_1(t)\]
for any $k\in\N$. This implies that
\begin{align*}
\var C_N&=\var\Big(\sum_{k=1}^N \alpha_k\int_0^T x_k(t) \d B^H_k(t)\Big) = \sum_{k=1}^N \alpha_k^2\,\var \Big(\int_0^T x_k(t)\d B^H_k(t) \Big) \\&= \sum_{k=1}^N \frac{\alpha_k^2}{\mu_k^{4H}}\,\var \Big(\int_0^{T_k} \tilde x_1(t) \d B^H_1(t)\Big) = \sum_{k=1}^N \frac{\alpha_k^2}{\mu_k^{4H}}\frac{1}{Q_H(T_k)}\E\big[ Q_H(T_k)F^2_{T_k}\big],
\end{align*}
where the last equality follows from the definition of $F_T$. By~\eqref{HNZkonv} the sequence $\left\{\E\big[ Q_H(T_k)F^2_{T_k}\big]\right\}_{k \in \N}$ is bounded and together with the definition of $Q_H$ the first part of the statements follows.

For the variance of $D_N$, apply \eqref{selfS} to get
\begin{align*}
\var D_N &= \var\Big(\sum_{k=1}^N \alpha_k^2\int_0^T x_k^2(t)\d t\Big) = \sum_{k=1}^N \alpha_k^4 \,\var \Big( \int_0^T x_k^2(t)\d t \Big) \\&= \sum_{k=1}^N \frac{\alpha_k^4}{\mu_k^{4H}} \,\var \Big( \int_0^T \tilde x_k^2(\mu_k t)\d t \Big) \\&= \sum_{k=1}^N \frac{\alpha_k^4}{\mu_k^{4H+2}} \frac{1}{Q_H(T_k)} Q_H(T_k) \,\var \Big( \int_0^{T_k} \tilde x_1^2(t)\d t\Big),
\end{align*}
and Lemma~\ref{konv_det} to obtain that the sequence $\left\{Q_H(T_k) \,\var \Big(\int_0^{T_k} \tilde x_1^2(t)\d t\Big)\right\}_{k \in \N}$is bounded. The second parts of the statements are again concluded by the definition of $Q_H$.
\end{proof}

Combining Proposition~\ref{str_h} and Proposition~\ref{rozptyl} we obtain the asymptotic formulas for variances of $\frac{C_N}{\E D_N}$ and $\frac{D_N}{\E D_N}$.
\begin{equation}\label{rozptyl_podilu}
\left\{
\begin{array}{llll}
H<\frac{3}{4}: & \var \dfrac{C_N}{\E D_N}\asymp \dfrac{\sum_{k=1}^N \dfrac{\alpha_k^2}{\mu_k^{4H-1}}}{\left(\sum_{k=1}^N \frac{\alpha_k^2}{\mu_k^{2H}}\right)^{\! 2}} & , & \var \dfrac{D_N}{\E D_N}\asymp \dfrac{\sum_{k=1}^N \dfrac{\alpha_k^4}{\mu_k^{4H+1}}}{\left(\sum_{k=1}^N \frac{\alpha_k^2}{\mu_k^{2H}}\right)^{\! 2}},\\[7mm]
H=\frac{3}{4}: & \var \dfrac{C_N}{\E D_N}\asymp \dfrac{\sum_{k=1}^N \dfrac{\alpha_k^2}{\mu_k^2}\log T_k}{\left(\sum_{k=1}^N \frac{\alpha_k^2}{\mu_k^{3/2}}\right)^{\! 2}} & , & \var \dfrac{D_N}{\E D_N}\asymp \dfrac{\sum_{k=1}^N \dfrac{\alpha_k^4}{\mu_k^4}\log T_k}{\left(\sum_{k=1}^N \frac{\alpha_k^2}{\mu_k^{3/2}}\right)^{\! 2}},\\[7mm]
H>\frac{3}{4}: & \var \dfrac{C_N}{\E D_N}\asymp \dfrac{\sum_{k=1}^N \dfrac{\alpha_k^2}{\mu_k^2}}{\left(\sum_{k=1}^N \frac{\alpha_k^2}{\mu_k^{2H}}\right)^{\! 2}} & , & \var \dfrac{D_N}{\E D_N}\asymp \dfrac{\sum_{k=1}^N \dfrac{\alpha_k^4}{\mu_k^4}}{\left(\sum_{k=1}^N \frac{\alpha_k^2}{\mu_k^{2H}}\right)^{\! 2}}.
\end{array}
\right.
\end{equation}

Note that if for some $c,\delta>0$ and for all $N\in\N$
\begin{equation}\label{odhad_var}
\var \frac{C_N}{\E D_N}\leq c \,N^{-\delta}\ \hbox{ and }\ \var \frac{D_N}{\E D_N}\leq c \,N^{-\delta}
\end{equation}
holds true then application of Lemma~\ref{Lem: a.s. convergence} \big($C_N/\E D_N$ and $(D_N-\E D_N)/ \E D_N$ are in the second Wiener chaos, see \eqref{eq: C_N D_N in CH_2} in Appendix\big) provides us with
$$\frac{C_N}{\E D_N}\xrightarrow[N\rightarrow\infty]{} 0\quad \hbox{ and }\quad \frac{D_N}{\E D_N}\xrightarrow[N\rightarrow\infty]{} 1\ \hbox{ a.s.}$$
so that the strong consistency of the estimate
$$\hat{\lambda}_N = \lambda -\dfrac{\frac{C_N}{\E D_N}}{\frac{D_N}{\E D_N}}\xrightarrow[N\rightarrow\infty]{} \lambda\ \hbox{ a.s.}$$
follows.

It is difficult to verify the sufficient conditions \eqref{odhad_var} in general and it should be done on case-by-case basis. However, if we adopt assumption \eqref{SPDE_ev}, often satisfied for parabolic SPDEs, the calculations of asymptotic variances simplify substantially.  

\begin{Theorem}\label{thm:cons_theor_1}
Let $B\equiv 0$, i.e. $\beta_k=0$ for all $k\in\N$, and the power growth \eqref{SPDE_ev} is satisfied for~$\alpha_k$. Then $\hat{\lambda}_N$ is strongly consistent for any $m_1,d\in\N$.
\end{Theorem}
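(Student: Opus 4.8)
The plan is to verify the two sufficient variance conditions \eqref{odhad_var}, since once these hold the a.s.\ convergences $C_N/\E D_N \to 0$ and $D_N/\E D_N \to 1$ follow from Lemma~\ref{Lem: a.s. convergence} (exactly as spelled out in the paragraph preceding \eqref{odhad_var}), whence the strong consistency of $\hat\lambda_N = \lambda - (C_N/\E D_N)/(D_N/\E D_N)$ is immediate. The decisive simplification offered by the hypotheses is that $\beta_k = 0$ forces $\mu_k = \lambda\alpha_k$, so that the ratio $\alpha_k/\mu_k = 1/\lambda$ is \emph{constant} in $k$; combined with the power growth \eqref{SPDE_ev}, every expression occurring in the asymptotic variance formulas \eqref{rozptyl_podilu} collapses to a pure power of $k$. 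Writing $a := 2m_1/d > 0$, so that $\alpha_k \asymp k^{a}$, one has up to $\asymp$ that $\alpha_k^2/\mu_k^{2H} \asymp k^{2a(1-H)}$, that both $\alpha_k^2/\mu_k^{4H-1}$ and $\alpha_k^4/\mu_k^{4H+1}$ are $\asymp k^{a(3-4H)}$, that $\alpha_k^2/\mu_k^2$ and $\alpha_k^4/\mu_k^4$ are constant in $k$, and that $\alpha_k^2/\mu_k^{3/2} \asymp k^{a/2}$.

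First I would record the elementary estimate $\sum_{k=1}^N k^p \asymp N^{p+1}$, valid for every $p > -1$ by integral comparison, together with $\sum_{k=1}^N \log T_k \asymp N\log N$ (for $N \geq 2$), which follows from $\log T_k = \log(\lambda\alpha_k T) = a\log k + O(1)$. All exponents that arise — namely $2a(1-H)$, $a(3-4H)$ for $H<3/4$, $a/2$, and $0$ — are strictly greater than $-1$ because $a>0$ and $H\in(0,1)$, so the power-sum estimate applies throughout. Substituting these asymptotics into \eqref{rozptyl_podilu} case by case: for $H<3/4$ both ratios reduce to $N^{a(3-4H)+1}/N^{4a(1-H)+2}$, whose exponent simplifies (the $H$-dependent terms cancel) to $-(a+1)$, so $\var(C_N/\E D_N) \asymp \var(D_N/\E D_N) \asymp N^{-(a+1)}$; for $H>3/4$ both numerators are $\asymp N$ against a denominator $\asymp N^{4a(1-H)+2}$, giving decay of order $N^{-1-4a(1-H)}$; and in the critical case $H=3/4$ both ratios are $\asymp (N\log N)/N^{a+2} = (\log N)/N^{a+1}$.

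In every case the two variances are bounded by $c\,N^{-\delta}$ for a common $\delta>0$: one may take $\delta = a+1$ when $H<3/4$, $\delta = 1+4a(1-H)$ when $H>3/4$, and any $0<\delta<a+1$ when $H=3/4$ (enlarging $c$ to absorb the finitely many small-$N$ terms and using $\log N = o(N^{\varepsilon})$ for each $\varepsilon>0$). This establishes \eqref{odhad_var} and finishes the proof. The only points demanding care, rather than genuine work, are that the relation $\asymp$ must be propagated correctly through the quotients and squared denominators of \eqref{rozptyl_podilu} (which is legitimate, as $\asymp$ is stable under products and quotients of positive sequences) and the handling of the logarithmic factor at the boundary $H=3/4$; the substantive analytic content, namely the variance asymptotics themselves, has already been supplied by Propositions~\ref{str_h} and~\ref{rozptyl}, so no new estimate is required here.
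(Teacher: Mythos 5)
Your proposal is correct and follows essentially the same route as the paper: substitute the power growth $\alpha_k\asymp k^{2m_1/d}$, $\mu_k=\lambda\alpha_k$ into the variance asymptotics \eqref{rozptyl_podilu}, compute the resulting exponents in the three regimes $H<\frac34$, $H=\frac34$, $H>\frac34$ (your exponents $-(a+1)$, $-(a+1)$ with a $\log N$ factor, and $-(4a(1-H)+1)$ match the paper's \eqref{eq:var speed B=0} exactly), verify \eqref{odhad_var}, and conclude via Lemma~\ref{Lem: a.s. convergence}. The only difference is expository: you spell out the integral-comparison estimates, the cancellation of the $H$-dependent exponents, and the absorption of the logarithm at $H=\frac34$, which the paper leaves implicit.
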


\begin{proof}
Denote the growth exponent of $\alpha_k$ by $M_1:=2m_1/d$. Then using formula~\eqref{rozptyl_podilu} we obtain

\begin{align}
\var\frac{C_N}{\E D_N}
&\asymp\var \frac{D_N}{\E D_N}\nonumber\\\label{eq:var speed B=0} 
&\asymp\left\{
\begin{array}{lll}
\dfrac{\sum_{k=1}^N k^{M_1(3-4H)}}{\left(\sum_{k=1}^N k^{2M_1(1-H)}\right)^{\! 2}}\asymp N^{-(M_1+1)} & , & \!H<\frac{3}{4},\\[4mm]
\dfrac{\sum_{k=1}^N \log (k^{M_1}T)}{\left(\sum_{k=1}^N k^{1/2 M_1}\right)^{\! 2}}\asymp N^{-(M_1+1)}\log N & , & \!H=\frac{3}{4},\\
\dfrac{N}{\left(\sum_{k=1}^N k^{2M_1(1-H)}\right)^{\! 2}}\asymp N^{-\big(4M_1(1-H)+1\big)} & , & \!H>\frac{3}{4}.
\end{array}\right.
\end{align}

Thus~\eqref{odhad_var} is satisfied for any $m_1,d$, so the conclusion follows from Lemma~\ref{Lem: a.s. convergence}.
\end{proof}

\begin{Theorem}\label{thm:cons_theor_2}
Assume the power growth of eigenvalues $\alpha_k$ and $\beta_k$ given in~\eqref{SPDE_ev}: 
\[
\alpha_k \sim k^{2m_1/d} \hbox{  and  } \beta_k \sim k^{2m_2/d}.
\]
If 
\begin{itemize}
\item[(i)] $m_1>m_2$ then $\hat{\lambda}_N$ is strongly consistent for any $m_1,m_2,d\in\N$,
\item[(ii)] $m_1<m_2$ and 
\[m_1>\left\{\begin{array}{lll}
-\frac{d}{4}+\frac{1}{2}m_2 & , & H<\frac{3}{4},\\[2mm]
-\frac{d}{4}+m_2(2H-1) & , & H\geq \frac{3}{4},
\end{array}\right.\]
then $\hat{\lambda}_N$ is strongly consistent.
\end{itemize}
\end{Theorem}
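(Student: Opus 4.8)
The plan is to reduce the statement, exactly as in the proof of Theorem~\ref{thm:cons_theor_1}, to the verification of the polynomial variance bounds \eqref{odhad_var}, after which Lemma~\ref{Lem: a.s. convergence} delivers the almost sure convergence of $C_N/\E D_N$ to $0$ and of $D_N/\E D_N$ to $1$. Everything rests on the asymptotic order of $\mu_k=\lambda\alpha_k+\beta_k$ under the power growth \eqref{SPDE_ev}. Writing $M_1=2m_1/d$ and $M_2=2m_2/d$, the first step is to separate the two regimes: if $m_1>m_2$ then $\beta_k/\alpha_k\to 0$, whence $\mu_k\sim\lambda\alpha_k$ and $\mu_k\asymp k^{M_1}$; if $m_1<m_2$ then $\alpha_k/\beta_k\to 0$, whence $\mu_k\sim\beta_k$ and $\mu_k\asymp k^{M_2}$.

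For part (i) the point is that $\mu_k\asymp\alpha_k\asymp k^{M_1}$, i.e.\ the drift scales precisely as in the case $\beta_k\equiv 0$ treated in Theorem~\ref{thm:cons_theor_1}. Hence in each of the three pairs of ratios in \eqref{rozptyl_podilu} the leading powers of $k$ in numerator and denominator coincide with those in \eqref{eq:var speed B=0}, so the same polynomial decay of $\var(C_N/\E D_N)$ and $\var(D_N/\E D_N)$ is obtained for every $m_1,m_2,d$, giving \eqref{odhad_var} and consistency.

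For part (ii) I would insert $\alpha_k\asymp k^{M_1}$ and $\mu_k\asymp k^{M_2}$ into \eqref{rozptyl_podilu} and evaluate the power sums through $\sum_{k\le N}k^{p}\asymp N^{p+1}$ for $p>-1$. Reading off the exponent of $N$ in each ratio, the $D_N$ term always yields a strictly negative power (namely $-(M_2+1)$ for $H\le 3/4$ and $-(4M_2(1-H)+1)$ for $H>3/4$), so it never restricts the parameters. The binding term is the $C_N$ ratio, whose exponent equals $-2M_1+M_2-1$ for $H\le 3/4$ and $-2M_1+2M_2(2H-1)-1$ for $H>3/4$; demanding that these be negative and passing back through $m_i=\tfrac{d}{2}M_i$ reproduces exactly the two stated lower bounds on $m_1$.

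The main obstacle is the bookkeeping of the power sums: the exponents in the numerators and denominators of \eqref{rozptyl_podilu} change sign across the admissible range of $(m_1,m_2,H)$, so one must verify, in the region carved out by the stated inequalities, that each sum is genuinely governed by its top power (equivalently that the normalising sum $\E D_N\asymp\sum_k\alpha_k^2\mu_k^{-2H}$ diverges) rather than being bounded or merely logarithmic. The boundary value $H=3/4$, where the factor $\log T_k=\log(\mu_k T)\asymp\log k$ enters, must be treated separately, but being subpolynomial it does not alter the sign of any decay exponent; likewise the transitions between convergent and divergent sums need to be inspected but do not enlarge the admissible set.
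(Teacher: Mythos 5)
Your proposal is correct and follows essentially the same route as the paper's proof: reduction to the variance bounds \eqref{odhad_var} plus Lemma~\ref{Lem: a.s. convergence}, observing for part (i) that $m_1>m_2$ gives $\mu_k\asymp\alpha_k\asymp k^{M_1}$ so the estimates of Theorem~\ref{thm:cons_theor_1} apply verbatim, and for part (ii) substituting $\alpha_k\asymp k^{M_1}$, $\mu_k\asymp k^{M_2}$ into \eqref{rozptyl_podilu}; your decay exponents $-2M_1+M_2-1$ (for $H\le 3/4$, with a harmless $\log N$ factor at $H=3/4$) and $-2M_1+2M_2(2H-1)-1$ (for $H>3/4$), as well as $-(M_2+1)$ and $-\bigl(4M_2(1-H)+1\bigr)$ for the $D_N$ ratio, coincide exactly with the paper's \eqref{eq:var speed B nonzero}. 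The paper, like you, carries out the computation only in the ``canonical case'' where all series exhibit genuine power growth and merely asserts that the exceptional (logarithmic) cases are covered as well, so your closing caveat about sums not governed by their top power matches the paper's own level of rigor on that point.
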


\begin{proof}
\begin{itemize}
\item[(i)] Applying again formula~\eqref{rozptyl_podilu} we obtain the same estimates as in the proof of Theorem~\ref{thm:cons_theor_1} thus the conclusion follows.
\item[(ii)] For conciseness, we consider the canonical case (all the involved series have power growth) only. The special cases, where some series in the expectation of $D_N$ or the variances of $C_N, D_N$ have logarithmic growth, have to be analyzed separately. Nevertheless, they are covered by~(ii) as well.

\noindent
For $\var \frac{D_N}{\E D_N}$ we get the same estimates as in the proof of Theorem~\ref{thm:cons_theor_1} but with $M_1$ substituted by $M_2:=2m_2/d$. For $\var \frac{C_N}{\E D_N}$ formula~\eqref{rozptyl_podilu} yields

\begin{align}
&\var \dfrac{C_N}{\E D_N}\nonumber\\[4mm]
&\quad \asymp\left\{
\begin{array}{l@{\!}ll}
\dfrac{\sum_{k=1}^N k^{2M_1-M_2(4H-1)}}{\left(\sum_{k=1}^N k^{2(M_1-HM_2)}\right)^{\! 2}}\asymp N^{-2M_1+M_2-1} & , & H<\frac{3}{4},\\[4mm]
\dfrac{\sum_{k=1}^N k^{2M_1+2M_2}\log(k^{M_2}T)}{\left(\sum_{k=1}^N k^{2M_1-3/2 M_2}\right)^{\! 2}}\asymp N^{-2M_1+M_2-1}\log N & , & H=\frac{3}{4},\\[4mm]
\dfrac{\sum_{k=1}^N k^{2(M_1-M_2)}}{\left(\sum_{k=1}^N k^{2(M_1-H M_2)}\right)^{\! 2}}\asymp N^{-2\big(M_1+M_2(1-2H)\big)-1} & , & H>\frac{3}{4}.
\end{array}\right. \label{eq:var speed B nonzero}
\end{align}

These estimates give us the required relationship between $m_1,m_2$ and $d$ to meet~\eqref{odhad_var}. The rest follows from Lemma~\ref{Lem: a.s. convergence}. 
\end{itemize}
\end{proof}

\begin{Remark}
The case $m_1=m_2$ is reduced to the case of Theorem~\ref{thm:cons_theor_1}.
\end{Remark}

\begin{Remark}
The expressions for $\var \frac{C_N}{\E D_N}$ given in formulas \eqref{eq:var speed B=0} and \eqref{eq:var speed B nonzero} show the speed of the convergence of $\hat{\lambda}_N$ to $\lambda$.
\end{Remark}

\subsection{Strong consistency of pathwise LSE}\label{subsect: Strong consistency of pathwise LSE}

In this subsection, strong consistency (as $N \to \infty$) of $\tilde{\lambda}_N$ is demonstrated. We continue to denote the true value of the unknown parameter by $\lambda$. Recall that the theoretical LSE satisfies $\hat{\lambda}_N = R_N(\lambda)$. Hence, if $\hat{\lambda}_N$ is strongly consistent (e.g. if the conditions of Theorem \ref{thm:cons_theor_1} or Theorem \ref{thm:cons_theor_2} are satisfied), we have:
\begin{equation}
    \lim_{N \to \infty} R_N(\lambda) = \lambda.\nonumber
\end{equation}
To show that the pathwise LSE $\tilde{\lambda}_N = R_N(\tilde{\lambda}_N)$ is arbitrarily close to $\lambda$ for sufficiently large $N$, the limiting behavior of derivatives of $R_N$ is studied. The following elementary observation will be useful in the sequel:

\begin{Lemma}\label{lem: a_k b_k}
Let $a_k, b_k \geq 0$ for $k = 1,2,\dots$, $ \lim_{k \to \infty} b_k = b \in (0,\infty)$, and $\sum_{k=1}^{\infty} a_k = \infty$. Then
\begin{equation}
 \lim_{N \to \infty} \frac{\sum_{k=1}^{N} a_k b_k}{\sum_{k=1}^{N} a_k b} = 1.\nonumber
\end{equation}
\end{Lemma}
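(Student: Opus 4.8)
The plan is to reduce the claim to showing that the normalized error tends to zero. Writing $S_N := \sum_{k=1}^{N} a_k$, the hypothesis $\sum_{k=1}^{\infty} a_k = \infty$ together with $a_k \geq 0$ gives $S_N \to \infty$, and $b \in (0,\infty)$ guarantees that the denominator $b\,S_N$ is positive and divergent, so the quotient is well defined for all large $N$. Subtracting $1$ and using $b>0$, I rewrite
\[
\frac{\sum_{k=1}^{N} a_k b_k}{\sum_{k=1}^{N} a_k b} - 1 = \frac{\sum_{k=1}^{N} a_k (b_k - b)}{b\, S_N},
\]
so it suffices to prove that the right-hand side converges to $0$.

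Next I would exploit the convergence $b_k \to b$ in the standard Toeplitz / weighted-Ces\`aro fashion. Fix $\varepsilon > 0$ and choose $K$ so that $|b_k - b| < \varepsilon$ for all $k > K$. Splitting the numerator at the index $K$ and using $a_k \geq 0$, I bound
\[
\left| \sum_{k=1}^{N} a_k (b_k - b) \right| \leq \sum_{k=1}^{K} a_k |b_k - b| + \varepsilon \sum_{k=K+1}^{N} a_k \leq C_K + \varepsilon\, S_N,
\]
where $C_K := \sum_{k=1}^{K} a_k |b_k - b|$ is a finite constant independent of $N$.

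Dividing by $b\, S_N$ then yields
\[
\left| \frac{\sum_{k=1}^{N} a_k (b_k - b)}{b\, S_N} \right| \leq \frac{C_K}{b\, S_N} + \frac{\varepsilon}{b}.
\]
Since $S_N \to \infty$, the first term on the right vanishes as $N \to \infty$, so the $\limsup_{N}$ of the left-hand side is at most $\varepsilon / b$; letting $\varepsilon \downarrow 0$ gives the desired limit. I do not anticipate a genuine obstacle here, as the statement is an elementary weighted-Ces\`aro (Toeplitz-summation) fact; the only points requiring care are that the denominator diverges (which is exactly where the hypotheses $\sum_{k} a_k = \infty$, $a_k \geq 0$, and $b > 0$ are used) and that the contribution $C_K$ of the finitely many early terms becomes negligible precisely because of that divergence.
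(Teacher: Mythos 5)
Your proof is correct. The paper itself disposes of this lemma in one line, citing the Stolz--Ces\`aro theorem: with $x_N = \sum_{k=1}^N a_k b_k$ and $y_N = b\sum_{k=1}^N a_k$, the difference quotient $(x_{N+1}-x_N)/(y_{N+1}-y_N) = b_{N+1}/b \to 1$, and the conclusion follows. You instead prove the statement from scratch by the standard Toeplitz-type $\varepsilon$-splitting: isolate the finitely many indices where $|b_k - b| \geq \varepsilon$, bound their contribution by a constant $C_K$, and let the divergence of $S_N$ kill that constant after division. The trade-off is brevity versus self-containedness, but your route has one genuine technical advantage: the hypotheses allow $a_k = 0$ for some (even infinitely many) $k$, in which case $y_N$ is not \emph{strictly} increasing and the difference quotient $b_{N+1}/b$ is a $0/0$ expression at those indices, so a literal application of Stolz--Ces\`aro requires either passing to the subsequence of indices with $a_k>0$ or invoking a version of the theorem with nonstrict monotonicity. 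Your argument needs no such repair, since it never forms ratios of increments; it only uses $S_N \to \infty$ and the triangle inequality. Both proofs are complete at the level of rigor expected here.
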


\begin{proof}
It is a simple consequence of the Stolz--Cesaro theorem.
\end{proof}

Next lemma bounds the derivatives of $R_N$ on a compact neighborhood of $\lambda$ from above by a constant smaller than one (for sufficiently large N):

\begin{Lemma}\label{lem: R_N derivative uniform conv}
Let the power growth condition~\eqref{SPDE_ev} be satisfied and $H\neq 1/2$. Further assume:
\begin{equation}\label{cond1}   
    \lim_{k \to \infty} \frac{\beta_k}{\alpha_k} = c \in [0,\infty],
\end{equation}
\begin{equation}\label{cond2}
    \sum_{k=1}^{\infty}\frac{\alpha_k^2}{(\alpha_k+\beta_k)^{2H}} = \infty.
\end{equation}
Then, almost surely, if we take arbitrary  $0< \Lambda_L < \Lambda_U < \infty$ we have
\begin{equation}\label{eq:  R_N derivative uniform conv}
\begin{aligned}
   & \lim_{N \to \infty} \sup_{\Lambda \in [\Lambda_L, \Lambda_U]} \left[\frac{1}{1-2H}\frac{\d R_N}{\d \Lambda} (\Lambda) \right] =\frac{(\lambda + c)^{2H}}{(\Lambda_L + c)^{2H}},\\
   & \lim_{N \to \infty} \inf_{\Lambda \in [\Lambda_L, \Lambda_U]} \left[\frac{1}{1-2H}\frac{\d R_N}{\d \Lambda} (\Lambda) \right] =\frac{(\lambda + c)^{2H}}{(\Lambda_U + c)^{2H}},
\end{aligned}
\end{equation}
where the right-hand sides equal one if $c = \infty$.
\end{Lemma}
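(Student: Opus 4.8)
The plan is to use that \eqref{eq: R 1. derivative} writes $\frac{\d R_N}{\d\Lambda}(\Lambda)=S_N(\Lambda)/D_N$ with a purely deterministic numerator $S_N(\Lambda):=\sum_{k=1}^N\alpha_k^2\frac{\d\Delta}{\d\mu}(\alpha_k\Lambda+\beta_k)$ and $D_N=\sum_{k=1}^N\alpha_k^2\int_0^T x_k^2(t)\d t$; thus the only randomness lives in $D_N$, and $D_N$ does not depend on $\Lambda$. Under \eqref{SPDE_ev} and \eqref{cond2} the variance estimates \eqref{rozptyl_podilu} give $\var(D_N/\E D_N)\le cN^{-\delta}$ for some $\delta>0$ (the same computation as in the theoretical-LSE consistency proofs, here for the $D_N$-part only), so Lemma \ref{Lem: a.s. convergence} yields $D_N/\E D_N\to 1$ almost surely on a single event $\Omega_0$ independent of $\Lambda$. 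On $\Omega_0$ the whole statement reduces to computing, for each fixed $\Lambda>0$, the deterministic limit of $S_N(\Lambda)/\E D_N$.

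First I would record the tail behaviour of the compensation derivative. Rewriting the inner integral in \eqref{eq: Delta 1. derivative} as $\mu T\gamma(2H,\mu T)-\gamma(2H+1,\mu T)$ and using $\gamma(2H,\mu T)\to\Gamma(2H)$ as $\mu\to\infty$, one obtains
\[
\mu^{2H}\frac{\d\Delta}{\d\mu}(\mu)\xrightarrow[\mu\to\infty]{}H(1-2H)\Gamma(2H)T.
\]
Since \eqref{SPDE_ev} forces $\alpha_k\to\infty$, hence $\alpha_k\Lambda+\beta_k\to\infty$, I factor $\alpha_k^2\frac{\d\Delta}{\d\mu}(\alpha_k\Lambda+\beta_k)=\frac{\alpha_k^2}{(\alpha_k\Lambda+\beta_k)^{2H}}\,b_k(\Lambda)$ with $b_k(\Lambda):=(\alpha_k\Lambda+\beta_k)^{2H}\frac{\d\Delta}{\d\mu}(\alpha_k\Lambda+\beta_k)\to H(1-2H)\Gamma(2H)T$. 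Conditions \eqref{cond1} and \eqref{cond2} together with $\Lambda>0$ guarantee $\sum_k\alpha_k^2(\alpha_k\Lambda+\beta_k)^{-2H}=\infty$, so Lemma \ref{lem: a_k b_k} gives $S_N(\Lambda)\sim H(1-2H)\Gamma(2H)T\sum_{k=1}^N\alpha_k^2(\alpha_k\Lambda+\beta_k)^{-2H}$. For the denominator I use Proposition \ref{str_h}, namely $\E D_N=\sum_k\frac{\alpha_k^2}{\mu_k^{2H}}\,T\,T_k^{-1}\int_0^{T_k}\E\tilde x_1^2(t)\d t$ with $\mu_k=\lambda\alpha_k+\beta_k$, where Lemma \ref{Ekonv} and \eqref{erg} make the last factor converge to $TH\Gamma(2H)$; a second application of Lemma \ref{lem: a_k b_k} yields $\E D_N\sim H\Gamma(2H)T\sum_{k=1}^N\alpha_k^2(\lambda\alpha_k+\beta_k)^{-2H}$.

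Combining the two asymptotics with $\E D_N/D_N\to1$, the common factor $H\Gamma(2H)T$ and the sign $1-2H$ cancel, so on $\Omega_0$ the quantity $\frac{1}{1-2H}\frac{\d R_N}{\d\Lambda}(\Lambda)$ is asymptotically equivalent to the deterministic ratio $\big[\sum_{k=1}^N\alpha_k^2(\alpha_k\Lambda+\beta_k)^{-2H}\big]\big/\big[\sum_{k=1}^N\alpha_k^2(\lambda\alpha_k+\beta_k)^{-2H}\big]$, whose limit a third application of Lemma \ref{lem: a_k b_k} evaluates:
\[
\frac{\sum_{k=1}^N\alpha_k^2(\alpha_k\Lambda+\beta_k)^{-2H}}{\sum_{k=1}^N\alpha_k^2(\lambda\alpha_k+\beta_k)^{-2H}}\xrightarrow[N\to\infty]{}\frac{(\lambda+c)^{2H}}{(\Lambda+c)^{2H}}.
\]
For $c<\infty$ I write the summands as $\alpha_k^{2-2H}(\Lambda+\beta_k/\alpha_k)^{-2H}$ resp. $\alpha_k^{2-2H}(\lambda+\beta_k/\alpha_k)^{-2H}$ and use $\beta_k/\alpha_k\to c$ with the divergent weight $\sum\alpha_k^{2-2H}=\infty$; for $c=\infty$ I instead take weight $a_k=\alpha_k^2(\lambda\alpha_k+\beta_k)^{-2H}$ and ratio $b_k=(\lambda+\beta_k/\alpha_k)^{2H}(\Lambda+\beta_k/\alpha_k)^{-2H}\to1$, giving the limit $1$, consistent with reading $(\lambda+c)^{2H}/(\Lambda+c)^{2H}$ as $1$. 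Finally the suprema and infima are handled by monotonicity rather than uniform convergence: by \eqref{eq: Delta 2. derivative} the sign of $\frac{\d^2 R_N}{\d\Lambda^2}$ is opposite to that of $1-2H$, so $\Lambda\mapsto\frac{1}{1-2H}\frac{\d R_N}{\d\Lambda}(\Lambda)$ is decreasing on $(0,\infty)$ for every $N$; its supremum over $[\Lambda_L,\Lambda_U]$ is attained at $\Lambda_L$ and its infimum at $\Lambda_U$, and inserting the endpoint pointwise limits just computed gives exactly \eqref{eq:  R_N derivative uniform conv}.

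The main obstacle I anticipate is the almost-sure step $D_N/\E D_N\to1$ under the present, weaker hypotheses (only \eqref{SPDE_ev} and \eqref{cond2}, not the full consistency restrictions on $m_1,m_2,d$): one must verify that the $D_N$-variance in \eqref{rozptyl_podilu} is genuinely polynomially small in $N$ in every regime of $H$. The remaining care is purely bookkeeping, namely ensuring that in each of the boundary cases $c=0$ and $c=\infty$ the weight sequence fed into Lemma \ref{lem: a_k b_k} still diverges, which is precisely what \eqref{cond2} secures.
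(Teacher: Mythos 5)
Your proposal is correct, and while its pointwise computations coincide with the paper's, your treatment of the supremum and infimum is genuinely different and in fact cleaner. The paper factorizes $\frac{\d R_N}{\d \Lambda}(\Lambda) = \frac{\E D_N}{D_N}\, U_N\, V_N(\Lambda)\, W_N(\Lambda)$, proves a \emph{uniform} (in $\Lambda$) convergence $V_N \to 1$ on $[\Lambda_L,\Lambda_U]$ by rerunning the argument of Lemma \ref{lem: a_k b_k} with sup/inf controls, and only then uses monotonicity of $\Lambda \mapsto W_N(\Lambda)/(1-2H)$ to locate the extrema at the endpoints. You bypass the uniform-convergence step entirely: since $D_N$ is positive and $\Lambda$-free, the sign statement \eqref{eq: Delta 2. derivative} makes $\Lambda \mapsto \frac{1}{1-2H}\frac{\d R_N}{\d \Lambda}(\Lambda)$ decreasing on $(0,\infty)$ for every fixed $N$, so the supremum and infimum over $[\Lambda_L,\Lambda_U]$ are \emph{exactly} the values at $\Lambda_L$ and $\Lambda_U$, and only two pointwise limits are needed; notably, the paper derives \eqref{eq: Delta 2. derivative} in Section \ref{sect: Existence and uniqueness of pathwise LSE} but never exploits it in its own proof of this lemma. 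Your endpoint limits are then computed exactly as in the paper: the limit \eqref{eq: lim a^2H d Delta }, repeated applications of Lemma \ref{lem: a_k b_k}, and the same case split $c<\infty$ versus $c=\infty$. Two caveats, both shared with the paper rather than specific to you: first, Lemma \ref{lem: a_k b_k} is stated for nonnegative $b_k$, whereas for $H>1/2$ your $b_k(\Lambda)$ and $b$ are negative, so it should formally be applied to $-b_k$ (the paper's $V_N$ argument has the same feature); second, the almost sure convergence $D_N/\E D_N \to 1$, which you rightly flag as the main outstanding step, is asserted in the paper only by reference to the proofs of Theorems \ref{thm:cons_theor_1} and \ref{thm:cons_theor_2}, and under \eqref{SPDE_ev} plus \eqref{cond2} the bound \eqref{rozptyl_podilu} is polynomially small except in borderline cases where $\E D_N$ grows only logarithmically, where Lemma \ref{Lem: a.s. convergence} does not apply directly and one would need, e.g., a subsequence argument combined with monotonicity of $D_N$ in $N$; the paper acknowledges but does not treat these cases either, so your proof is not behind the paper's on this point.
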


\begin{proof}
Recall the formula for the derivative of $R_N$ in \eqref{eq: R 1. derivative} and the notation from previous subsection $D_N = \sum_{k=1}^{N}\alpha_k^2 \int_0^T x^2_k(t)\d t$. It follows from the proof of Proposition \ref{str_h} that
\[
    \mathbb{E}D_N = \sum_{k=1}^N \frac{\alpha_k^2}{\mu_k^{2H}} T \varphi(\mu_k T),
\]
where
\[
    \varphi(\tau) = \frac{1}{\tau}\int_{0}^{\tau}\mathbb{E} \tilde{x}^2_1(t)\d t \xrightarrow[\tau \to \infty]{} H \Gamma(2H).
\]
Use \eqref{eq: R 1. derivative} to write (for any $\Lambda > 0$):
\begin{equation}
\begin{aligned}
&\frac{\d R_N}{\d \Lambda} (\Lambda)  =  \frac{\sum_{k=1}^{N} \alpha^2_k \frac{\d \Delta}{\d \mu}(\alpha_k \Lambda + \beta_k)}{D_N} = \frac{\mathbb{E}D_N}{D_N}  \frac{\sum_{k=1}^{N}  \alpha^2_k \frac{\d \Delta}{\d \mu}(\alpha_k \Lambda + \beta_k)}{\sum_{k=1}^N \frac{\alpha_k^2}{\mu_k^{2H}} T \varphi(\mu_k T)}.
\end{aligned}
\end{equation}
Formula \eqref{eq: Delta 1. derivative} together with $\lim_{T \to \infty} \gamma(h, T) = \Gamma(h)$ yield
\begin{equation}\label{eq: lim a^2H d Delta }
 \lim_{\mu \to \infty} \mu^{2H} \frac{\d \Delta}{\d \mu} (\mu) =  T H (1-2H) \Gamma(2H).
\end{equation}
Consequently, we can factorize
\begin{equation}
\begin{aligned}
&\frac{\d R_N}{\d \Lambda} (\Lambda) =  \frac{\mathbb{E}D_N}{D_N} U_N  V_N(\Lambda)  W_N(\Lambda),\nonumber
\end{aligned}
\end{equation}
with

\begin{align*}
   U_N &=  \frac{\sum_{k=1}^N \frac{\alpha_k^2}{\mu_k^{2H}} T H \Gamma(2H)}{\sum_{k=1}^N \frac{\alpha_k^2}{\mu_k^{2H}} T \varphi(\mu_k T)}, \\
   V_N(\Lambda) &= \frac{\sum_{k=1}^{N}  \alpha^2_k \frac{\d \Delta}{\d \mu}(\alpha_k \Lambda + \beta_k)}{\sum_{k=1}^{N}  \frac{\alpha^2_k}{(\alpha_k \Lambda + \beta_k)^{2H}}   T H (1-2H) \Gamma(2H)},\\
   W_N(\Lambda) &= \frac{\sum_{k=1}^{N}  \frac{\alpha^2_k}{(\alpha_k \Lambda + \beta_k)^{2H}} T H (1-2H) \Gamma(2H)}{\sum_{k=1}^N \frac{\alpha_k^2}{\mu_k^{2H}} T H \Gamma(2H)}.
\end{align*}

Recall that condition~\eqref{SPDE_ev} implies (cf. proofs of Theorem~\ref{thm:cons_theor_1} and Theorem~\ref{thm:cons_theor_2})
\[
    \lim_{N \to \infty} \frac{D_N}{\mathbb{E}D_N} = 1 \quad \text{almost surely}.
\]
Assumptions of Lemma \ref{lem: a_k b_k} with
\[
a_k = \frac{\alpha_k^2}{\mu_k^{2H}}, \quad b_k=T \varphi(\mu_k T), \quad \text{and } b = T H \Gamma(2H),
\]
are guaranteed by conditions \eqref{cond2} and \eqref{lim mu_k}. In result,
\[
\lim_{N \to \infty} U_N = 1.	
\]
Denote (motivated by Lemma \ref{lem: a_k b_k})
\[
a_k(\Lambda) = \frac{\alpha^2_k}{(\alpha_k \Lambda + \beta_k)^{2H}}, \quad b_k(\Lambda)= (\alpha_k \Lambda + \beta_k)^{2H}  \frac{\d \Delta}{\d \mu}(\alpha_k \Lambda + \beta_k)
\] 
and 
\[ b = T H (1-2H)  \Gamma(2H).
\]
Fix arbitrary  $0<\Lambda_L< \Lambda_U<\infty$.  Obviously, due to \eqref{lim mu_k} and \eqref{eq: lim a^2H d Delta },
\[
\sup_{\Lambda \in [\Lambda_L,\Lambda_U]}\big|b_k(\Lambda) - b\big| \xrightarrow[k \to \infty]{} 0,
\]
and, due to \eqref{cond2}
\[
\inf_{\Lambda \in [\Lambda_L,\Lambda_U]}\sum_{k=1}^{N}a_k(\Lambda) = \sum_{k=1}^{N}a_k(\Lambda_U) \xrightarrow[N \to \infty]{} \infty.
\]
Similarly to proof of Lemma \ref{lem: a_k b_k} we can show
\[
\lim_{N \to \infty} \sup_{\Lambda \in [\Lambda_L,\Lambda_U]} \big|V_N(\Lambda)-1\big| = \lim_{N \to \infty} \sup_{\Lambda \in [\Lambda_L,\Lambda_U]} \left|\frac{\sum_{k=1}^{N} a_k(\Lambda) b_k(\Lambda)}{\sum_{k=1}^{N} a_k(\Lambda) b} -1\right|  =  0.	
\]

For the last term we can use the upper bound:
\[
  \sup_{\Lambda \in [\Lambda_L,\Lambda_U]} \left[\frac{W_N(\Lambda)}{1-2H}\right] = \frac{\sum_{k=1}^{N}  \frac{\alpha^2_k}{(\alpha_k \Lambda_L + \beta_k)^{2H}} }{\sum_{k=1}^N \frac{\alpha_k^2}{(\alpha_k \lambda + \beta_k)^{2H}}}.
\]
Assume first $\lim_{k \to \infty} \frac{\beta_k}{\alpha_k} = c \in [0,\infty)$. Then
\[
    \frac{\sum_{k=1}^{N}  \frac{\alpha^2_k}{(\alpha_k \Lambda_L + \beta_k)^{2H}} }{\sum_{k=1}^N \frac{\alpha_k^2}{(\alpha_k \lambda + \beta_k)^{2H}}} = \frac{\sum_{k=1}^{N}  \alpha^{2-2H}_k\big(\Lambda_L + \frac{\beta_k}{\alpha_k}\big)^{-2H}} {\sum_{k=1}^{N}  \alpha^{2-2H}_k\big(\lambda + \frac{\beta_k}{\alpha_k}\big)^{-2H}}.
\]
Application of Lemma \ref{lem: a_k b_k} with
\[
a_k = \alpha^{2-2H}_k, \quad b_k = \Big(\Lambda + \frac{\beta_k}{\alpha_k}\Big)^{\!-2H}, \quad b = (\Lambda + c)^{-2H},
\]
and $\Lambda = \Lambda_L$ or $\Lambda = \lambda$, respectively, and where
\[
\sum_{k=1}^{\infty} \alpha^{2-2H}_k \geq \sum_{k=1}^{\infty}\frac{\alpha_k^2}{(\alpha_k+\beta_k)^{2H}} = \infty,
\]
results in
\begin{equation}
    \lim_{N \to \infty} \frac{\sum_{k=1}^{N}  \frac{\alpha^2_k}{(\alpha_k \Lambda_L + \beta_k)^{2H}} }{\sum_{k=1}^N \frac{\alpha_k^2}{(\alpha_k \lambda + \beta_k)^{2H}}} = \lim_{N \to \infty} \frac{\sum_{k=1}^{N}  \alpha^{2-2H}_k(\Lambda_L + c)^{-2H}} {\sum_{k=1}^{N}  \alpha^{2-2H}_k(\lambda + c)^{-2H}} = \frac{(\lambda + c)^{2H}}{(\Lambda_L + c)^{2H}}.\nonumber
\end{equation}
Next, assume instead $c = \infty$, i.e. $\lim_{k \to \infty}\frac{\alpha_k}{\beta_k}= 0$. We can proceed similarly to previous case
\begin{equation}
\lim_{N \to \infty} \frac{\sum_{k=1}^{N}  \frac{\alpha^2_k}{(\alpha_k \Lambda_L + \beta_k)^{2H}} }{\sum_{k=1}^N \frac{\alpha_k^2}{(\alpha_k \lambda + \beta_k)^{2H}}} = \lim_{N \to \infty} \frac{\sum_{k=1}^{N}  \frac{\alpha^2_k}{\beta_k^{2H}}\big(\frac{\alpha_k}{\beta_k} \Lambda_L + 1\big)^{-2H}} {\sum_{k=1}^{N}  \frac{\alpha^2_k}{\beta_k^{2H}}\big(\frac{\alpha_k}{\beta_k} \lambda + 1\big)^{-2H}} = 1.\nonumber
\end{equation}
To summarize, we have
\[
\lim_{N \to \infty} \sup_{\Lambda \in [\Lambda_L,\Lambda_U]} \left[\frac{W_N(\Lambda)}{1-2H}\right] =
\begin{cases}
\frac{(\lambda + c)^{2H}}{(\Lambda_L + c)^{2H}} \quad &\text{if } c \in [0,\infty),\\
1 \quad &\text{if }c= \infty.\\
\end{cases}
\]
The limit for infimum can be obtained accordingly, since
\[
  \inf_{\Lambda \in [\Lambda_L,\Lambda_U]} \left[\frac{W_N(\Lambda)}{1-2H}\right] = \frac{\sum_{k=1}^{N}  \frac{\alpha^2_k}{(\alpha_k \Lambda_U + \beta_k)^{2H}} }{\sum_{k=1}^N \frac{\alpha_k^2}{(\alpha_k \lambda + \beta_k)^{2H}}}.
\]
\end{proof}

\begin{Theorem}\label{thm: strong consistency modifie LSE}
Assume that the eigenvalues $\alpha_k$ and $\beta_k$ satisfy the power growth conditions~\eqref{SPDE_ev} and the asymptotic conditions~\eqref{cond1}, \eqref{cond2} and \eqref{lim mu_k}. Further assume that the theoretical LSE $\hat{\lambda}_N$ is strongly consistent (see e.g. Theorem~\ref{thm:cons_theor_1} and Theorem~\ref{thm:cons_theor_2}). Then the pathwise LSE is strongly consistent as well:
\begin{equation}\label{eq: strong consistency modifie LSE}
 \lim_{N \to \infty} \tilde{\lambda}_N = \lambda \quad \text{almost surely}.
\end{equation}
Moreover,  $\tilde{\lambda}_N$ and $\hat{\lambda}_N$ have the same speed of convergence (up to a constant):
\begin{equation}\label{eq: speed modified LSE and theoretical LSE}
    \lim_{N \to \infty} \frac{|\lambda - \hat{\lambda}_N|}{|\lambda - \tilde{\lambda}_N|} =  2H \quad \text{almost surely}.
\end{equation}
\end{Theorem}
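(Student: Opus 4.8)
The plan is to build everything on the two identities $\hat\lambda_N = R_N(\lambda)$ and $\tilde\lambda_N = R_N(\tilde\lambda_N)$, combined with the derivative asymptotics of Lemma~\ref{lem: R_N derivative uniform conv}. Letting the endpoints in that lemma shrink to $\lambda$ shows that $\frac{\d R_N}{\d \Lambda}(\lambda) \to 1-2H$ almost surely, a number of modulus strictly below one since $H \in (0,1)\setminus\{1/2\}$. Introducing $g_N(\Lambda) := R_N(\Lambda) - \Lambda$, the estimator $\tilde\lambda_N$ is a positive root of $g_N$, and $g_N(\lambda) = \hat\lambda_N - \lambda \to 0$ almost surely by the assumed consistency of $\hat\lambda_N$.

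For the consistency in \eqref{eq: strong consistency modifie LSE} I would fix a small $\epsilon>0$ with $\lambda-\epsilon>0$ and work on $I_\epsilon = [\lambda-\epsilon,\lambda+\epsilon]$. Lemma~\ref{lem: R_N derivative uniform conv} bounds the supremum and infimum of $\tfrac{1}{1-2H}\frac{\d R_N}{\d \Lambda}$ over $I_\epsilon$ by quantities tending to $1$ as $\epsilon\downarrow 0$; hence for all large $N$ the derivative $g_N' = \frac{\d R_N}{\d \Lambda}-1$ is strictly negative and bounded away from zero, say $g_N'\le -\delta$, on $I_\epsilon$ (this holds both for $H<1/2$, where $\frac{\d R_N}{\d \Lambda}\to 1-2H\in(0,1)$, and for $H>1/2$, where $\frac{\d R_N}{\d \Lambda}<0$ so $g_N'<-1$). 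Since $g_N(\lambda)\to 0$, integrating the uniform bound across $I_\epsilon$ yields $g_N(\lambda-\epsilon)>0>g_N(\lambda+\epsilon)$ for all large $N$, so the intermediate value theorem together with strict monotonicity produces a unique root of $g_N$ in $(\lambda-\epsilon,\lambda+\epsilon)$.

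The step I expect to be the main obstacle is to check that this local root is precisely the estimator selected by \eqref{definition of modified LSE}, rather than a spurious one. For this I would appeal to the global shape of $R_N$ established before Theorem~\ref{thm: existence and uniqueness of modified lambda}: when $H>1/2$, $g_N$ is convex with $g_N'<-1$ everywhere, hence globally decreasing and the root is the unique positive solution of \eqref{equation modified LSE}; when $H<1/2$, $g_N$ is concave with $g_N(\Lambda)\to-\infty$, so it has at most two positive roots, and the root lying in $I_\epsilon$ is on the decreasing branch, i.e.\ the greater of the two, which is exactly the value prescribed in the third clause of \eqref{definition of modified LSE}. In both regimes the $I_\epsilon$-root coincides with $\tilde\lambda_N$; running $\epsilon$ through a sequence $1/m$ and intersecting the underlying probability-one events gives $\tilde\lambda_N\to\lambda$ almost surely.

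For the rate comparison \eqref{eq: speed modified LSE and theoretical LSE} I would apply the mean value theorem to $g_N$ on the segment between $\lambda$ and $\tilde\lambda_N$: using $g_N(\tilde\lambda_N)=0$, there is a point $\xi_N$ between them with
\[
\hat\lambda_N-\lambda = g_N(\lambda) = \Big(\tfrac{\d R_N}{\d \Lambda}(\xi_N)-1\Big)(\lambda-\tilde\lambda_N),
\]
so that $\frac{|\hat\lambda_N-\lambda|}{|\lambda-\tilde\lambda_N|} = \big|\tfrac{\d R_N}{\d \Lambda}(\xi_N)-1\big|$. Since the already-proven consistency squeezes $\xi_N\to\lambda$, the uniform derivative bounds of Lemma~\ref{lem: R_N derivative uniform conv} on the shrinking intervals force $\frac{\d R_N}{\d \Lambda}(\xi_N)\to 1-2H$, and therefore the ratio converges to $|(1-2H)-1| = 2H$, as claimed.
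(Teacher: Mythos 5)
Your proposal is correct and follows essentially the same route as the paper's proof: both rest on the identities $\hat\lambda_N = R_N(\lambda)$ and $\tilde\lambda_N = R_N(\tilde\lambda_N)$, use the uniform derivative bounds of Lemma~\ref{lem: R_N derivative uniform conv} to make $\Lambda \mapsto \Lambda - R_N(\Lambda)$ strictly decreasing with slope bounded away from zero on a neighborhood of $\lambda$, combine this with the smallness of $\lambda - R_N(\lambda)$ to locate a root in $(\lambda-\epsilon,\lambda+\epsilon)$, and apply the mean value theorem to obtain \eqref{eq: speed modified LSE and theoretical LSE}. The one point where you go beyond the paper is the identification of this local root with the estimator defined in \eqref{definition of modified LSE}, which the paper merely asserts but you justify via the convexity (for $H>1/2$) or concavity (for $H<1/2$) of $R_N$; this is a sound and welcome clarification, and the only remaining remark is that the trivial case $H=1/2$, where $\tilde\lambda_N=\hat\lambda_N$, should be dispatched separately as the paper does.
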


\begin{proof}
Recall that if $H=1/2$, the theorem holds true trivially, because $\tilde{\lambda}_N = \hat{\lambda}_N$  in this case. In the rest of the proof, consider $H \neq 1/2$.
First, take $\Lambda_L < \lambda$ so that
\[
(1-2H) \frac{(\lambda + c)^{2H}}{(\Lambda_L + c)^{2H}} < 1.
\]
and arbitrary $\Lambda_U > \lambda$. Lemma \ref{lem: R_N derivative uniform conv} guarantees that, for almost all $\omega$ and some $\Psi >0$,
\[
    \inf_{\Lambda \in [\Lambda_L, \Lambda_U]} \left[1- \frac{\d R_N}{\d \Lambda} (\Lambda)\right] >  \Psi,
\]
for sufficiently large $N$. Fix $\epsilon > 0$ such that $\Lambda_L < \lambda - \epsilon <  \lambda + \epsilon < \Lambda_U$ and denote $\delta = \epsilon  \Psi$. The almost sure convergence $\lim_{N \to \infty} R_N(\lambda) = \lambda$, which is guaranteed by strong consistency of the theoretical LSE, enables us to take $N$ large enough in order to
\[
  \big|\lambda - R_N(\lambda)\big| < \delta.
\]
Combination of the lower bound on derivative of $\Lambda - R_N(\Lambda)$ and its closeness to zero at $\lambda$ implies existence of its root inside $(\lambda-\epsilon,  \lambda+\epsilon)$, which is unique therein and it equals $\tilde{\lambda}_N$. This proves \eqref{eq: strong consistency modifie LSE}.

\bigskip
To demonstrate \eqref{eq: speed modified LSE and theoretical LSE}, note that for sufficiently large $N$
\[
\frac{|\lambda - \hat{\lambda}_N|}{|\lambda - \tilde{\lambda}_N|} = \left| 1-  \frac{\d R_N}{\d \Lambda} (\Lambda^0_N) \right|,
\]
where $\Lambda^0_N$ is between $\tilde{\lambda}_N$ and $\lambda$ and therefore converges to $\lambda$ almost surely. Next, fix arbitrary $\epsilon > 0$ and take $\Lambda_L < \lambda <\Lambda_U$ so that the right-hand sides of \eqref{eq:  R_N derivative uniform conv} are close enough to one and
\[
\sup_{\Lambda \in [\Lambda_L, \Lambda_U]} \left| \frac{\d R_N}{\d \Lambda} (\Lambda) - (1-2H) \right| < \epsilon,
\]
for sufficiently large $N$. In particular, for  $N$ large enough we have almost surely
\[
\left| \frac{\d R_N}{\d \Lambda} (\Lambda^0_N) - (1-2H) \right| < \epsilon.
\]
This proves the desired almost sure convergence
\[
 \lim_{N \to \infty} \frac{|\lambda - \hat{\lambda}_N|}{|\lambda - \tilde{\lambda}_N|} =\lim_{N \to \infty} \left| 1-  \frac{\d R_N}{\d \Lambda} (\Lambda^0_N) \right| = \big|1-(1-2H)\big| = 2H.
\]
\end{proof}

\begin{Remark}
Immediate, and possibly surprising, consequence of \eqref{eq: speed modified LSE and theoretical LSE} is that the pathwise LSE performs asymptotically better than the theoretical LSE whenever $H>1/2$.
\end{Remark}

\subsection{Example}\label{subsect: Example}
As an example of the use of Theorem \ref{thm: strong consistency modifie LSE}, consider a stochastic heat equation with additional mean reversion, with a distributed noise autocorrelated in time and with the Dirichlet boundary condition. Formally, we can write this equation as

\begin{equation}\label{eq: HE_example}
\begin{aligned}
\frac{\partial u}{\partial t}(t,\xi) &= \lambda_1 \; \Delta_\xi u(t,\xi) - \lambda_2 \; u(t,\xi) + \eta^H(t,\xi), \quad \text{ for } (t,\xi) \in \mathbb{R}_+ \times \mathcal{O},   \\
u(t,\xi) &= 0, \quad \text{ for } (t,\xi)  \in \mathbb{R}_+ \times \partial \mathcal{O},   \\
u(0,\xi)&=0, \quad \text{ for } \xi \in \mathcal{O}  ,
\end{aligned}
\end{equation}
where $\Delta_\xi$ is Laplace operator in variable $\xi$,  $\mathcal{O} \subset \mathbb{R}^d$ is a $d$-dimensional bounded domain with smooth boundary $\partial \mathcal{O}$, $\lambda_1 \geq 0$ is the diffusivity parameter determining the speed of diffusion of a modeled substance along the domain $\mathcal{O}$, $\lambda_2 \geq 0$ is the rate of mean reversion related to the speed of reversion of the amount  of substance to the zero mean. The process $\{\eta^H(t,\xi);\, t\geq 0, \xi \in \mathcal{O}\}$ is a Gaussian noise, which is fractional (autocorrelated) in time with Hurst parameter $H\in(0,1)$ and white in space. Formally it can be modelled as the differential of a cylindrical fractional Brownian motion $\d B^H(t)$ (see Appendix \ref{Appendix:cfBm} for details). 
\bigskip
 
This formal equation can be rewritten rigorously as a stochastic evolution equation~\eqref{sEE} in $V = L^2(\mathcal{O})$, with $A = \Delta|_{Dom(A)}$, where $Dom(A) = H^2(\mathcal{O}) \cap H^1_0(\mathcal{O})$, being  Dirichlet Laplace operator defined on a standard Sobolev space (cf. \cite{Shubin2001}) and with $B$ being the identity operator on $L^2(\mathcal{O})$, should $\lambda_1$ be the estimated parameter (or vice-versa, should we estimate $\lambda_2$). Consequently, this equation is diagonalizable with eigenvalues satisfying the power growth condition \eqref{SPDE_ev}. 
\bigskip

Firstly, assume we want to estimate the diffusivity parameter $\lambda_1$. If $\lambda_2 = 0$, we have a stochastic heat equation with $\alpha_k \sim k^{2/d}$, and $\beta_k = 0$. Theorem~\ref{thm:cons_theor_1} guarantees strong consistency of the theoretical LSE $\hat{\lambda}_N$. If  $\lambda_2 > 0$, there is an additional mean reversion in the process and we have $\alpha_k \sim k^{2/d}$, and $\beta_k \sim 1$. We can apply Theorem~\ref{thm:cons_theor_2} (i) to get strong consistency of the theoretical LSE $\hat{\lambda}_N$. In both cases, Theorem~\ref{thm: strong consistency modifie LSE} implies strong consistency of the pathwise LSE $\tilde{\lambda}_N$ with the speed of convergence (in terms of RMSE) given by 
\begin{equation}\label{eq:speed HE lamba_1}
\sqrt{\E(\tilde{\lambda}_N - \lambda)^2} \asymp	\left\{
\begin{array}{lll}
 N^{-(1/d+1/2)} & , & H<\frac{3}{4},\\[1mm]
 N^{-(1/d+1/2)}\sqrt{\log N} & , & H=\frac{3}{4},\\
 N^{-\big(4(1-H)/d+1/2\big)} & , & H>\frac{3}{4}.
\end{array}\right. 
\end{equation}	 

Secondly, let $\lambda_2$ be the parameter of interest. If $\lambda_1 = 0$ we have $\alpha_k \sim 1$ and $\beta_k = 0$, and the existence condition \eqref{eq: existence cond} is violated. Hence, we consider the case $\lambda_1 > 0$, when the process combines diffusion and mean reversion. For the eigenvalues, we have $\alpha_k \sim 1$, and $\beta_k \sim k^{2/d}$. Theorem~\ref{thm:cons_theor_2} (ii) provides us with strong consistency of $\hat{\lambda}_N$ if $d>2$ for $H<3/4$ and $d>4(2H-1)$ otherwise. To show strong consistency of the pathwise LSE by Theorem~\ref{thm: strong consistency modifie LSE}, we need in addition that $d \geq 4H$ so that~\eqref{cond2} is satisfied. If the conditions for consistency are satisfied, the speed of convergence is
\begin{equation}\label{eq:speed HE lamba_2}
\sqrt{\E(\tilde{\lambda}_N - \lambda)^2} \asymp	\left\{
\begin{array}{lll}
 N^{1/d-1/2} & , & H<\frac{3}{4},\\[1mm]
 N^{1/d-1/2}\sqrt{\log N} & , & H=\frac{3}{4},\\[1mm]
 N^{(4H-2)/d-1/2} & , & H>\frac{3}{4}.
\end{array}\right. 
\end{equation}


\section{Simulation study}\label{sect: Simulation study}

To illustrate the actual performance of the studied pathwise LSE we perform a Monte Carlo analysis for two specific equations -- a 1D heat equation (the simplest setting) and a 2D heat equation with coupling with surroundings (a model popular e.g. in oceanography). All simulations were performed in statistical software R. 

\subsection{1D stochastic heat equation}
This equation has very simple structure and serves as an easy-to-undersand toy-model suitable for illustration of the theory above. We simulate a stochastic heat equation with zero boundary condition from subsection~\ref{subsect: Example} on the line segment (0,1), with true diffusivity $\lambda_1 = 1$ being a parameter to be estimated ($\lambda = \lambda_1$) and without additional mean reversion ($\lambda_2 = 0$). In particular, we set 
\begin{itemize}
	\item $d = 1$, $\mathcal{O} = (0,1)$ and time horizon $T = 1$,
	\item $\bigl(A f\bigr)(\xi) = \frac{\partial^2 f}{\partial \xi^2}, \ \xi \in (0,1), \quad B \equiv 0$, 
	\item $e_k(\xi) = \sin(k \pi \xi), \quad \alpha_k = (k \pi)^2, \quad \beta_k = 0 \quad \forall k \in \mathbb{N}$,
	\item two scenarios for Hurst index: $H \in \{0.2, 0.8\}$, 
	\item two scenarios for initial condition: (i) $x_k(0) = 0$ for all $k \geq 1$ (zero IC), and (ii) $x_1(0) = 10, \ x_2(0) = 5, \ x_3(0) = 2, \ x_k(0) = 0, \ k \geq 4$ (non-zero IC).
\end{itemize}
The observed Fourier modes take the following form 
\[x_k(t) = \int_0^1 u(t,\xi) \sin(k \pi \xi) \d \xi.\]
For simulations we used the Spectral Galerkin Method (see \cite{Lord_etAl_2014}, chapter 10.7.), which is very efficient in our setting (space discretization provides system of independent linear stochastic ODEs, which, in addition, can be solved explicitly). Heat maps of four simulated sample solutions for various scenarios are shown in Figure \ref{fig:sample_solution1D}.

\begin{figure}
  \includegraphics[scale=0.5, keepaspectratio]{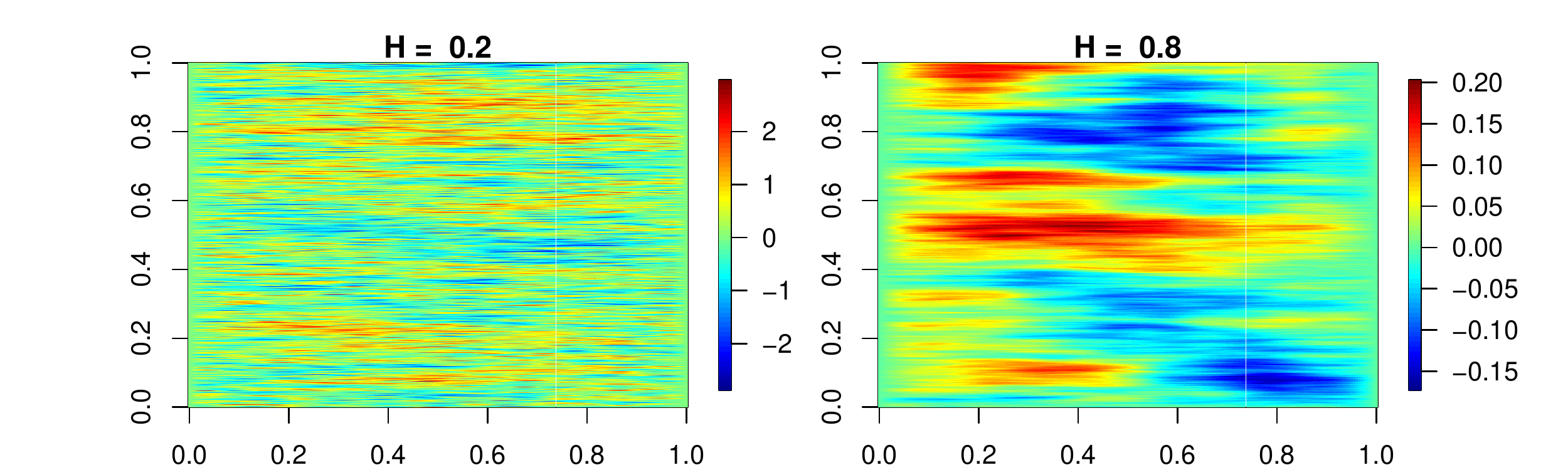} 
	\includegraphics[scale=0.5, keepaspectratio]{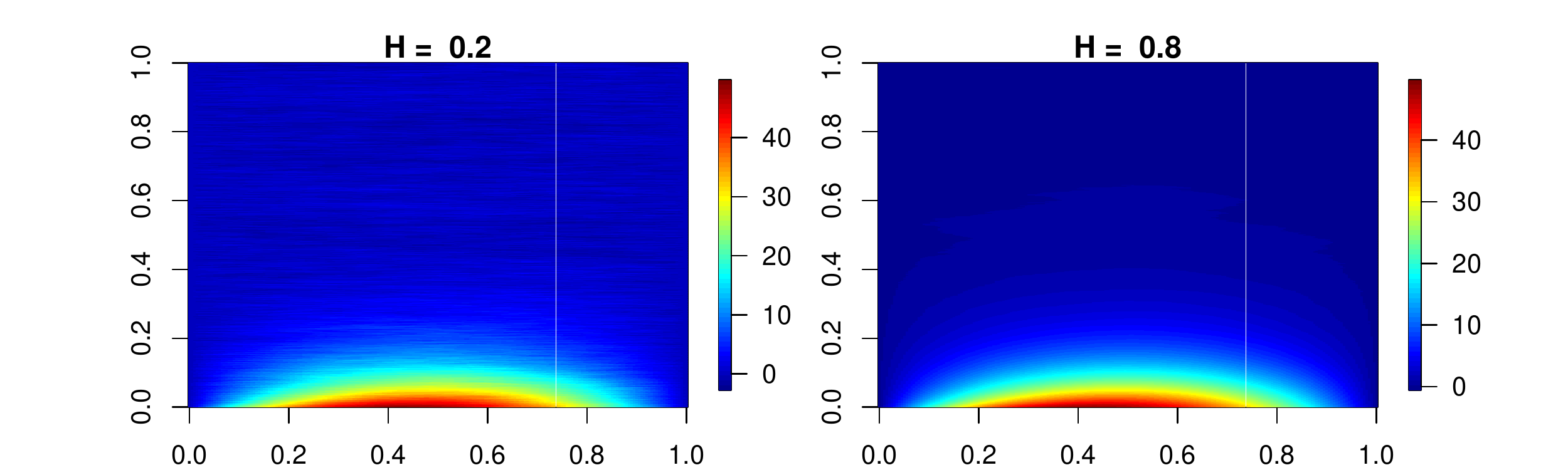}
  \caption{Heat maps of sample solutions of 1D stochastic heat equation with zero (the upper row) and non-zero (the lower row) initial conditions. Horizontal axis represents the physical space domain and vertical axis the time domain.}
  \label{fig:sample_solution1D}
\end{figure}

We test performance of the pathwise LSE, defined in \eqref{definition of modified LSE}, and compare it with the weighted MCE, an alternative estimator defined and studied in \cite{Kriz2020}, cf. formulas (3.21)\,--\,(3.23) and Theorem 3.2 therein. Recall that the weighted MCE is strongly consistent and asymptotically normal with increasing number of observed modes. 

Simulation results were obtained from 200 runs for each scenarios producing sample of 200 estimates for each setting. Figure~\ref{fig:convergence_quantiles1D} illustrates the convergence of the estimates to the true value of the parameter with increasing number of Fourier modes by showing selected sample quantiles. It confirms the convergence for all settings. In addition, non-zero initial condition significantly improves the performance of the pathwise LSE. Figure~\ref{fig:convergence_RMSE1D} shows convergence of the root mean square error (RMSE) for samples of the two types of estimators (pathwise LSE and weighted MCE) and its comparison with the theoretical speed of convergence for RMSE from \eqref{eq:speed HE lamba_1} (applicable only for solutions with zero initial condition). In case of zero initial condition, RMSE for the pathwise LSE and the weighted MCE practically coincide and their speed of convergence is consistent with the theoretical one. For solutions with far-from-zero initial conditions RMSE for the pathwise LSE is close to zero even for small number of Fourier modes, whereas the weighted MCE converges significantly slower. Figure \ref{fig:QQplot1D} shows comparison of sample quantiles of pathwise LSE (40 Fourier modes considered) with quantiles of normal distribution including 95\% confidence envelope. Although not proved theoretically, these Q-Q plots suggest asymptotic normality of the pathwise LSE for all settings considered.

\begin{figure}
  \includegraphics[scale=0.4,keepaspectratio]{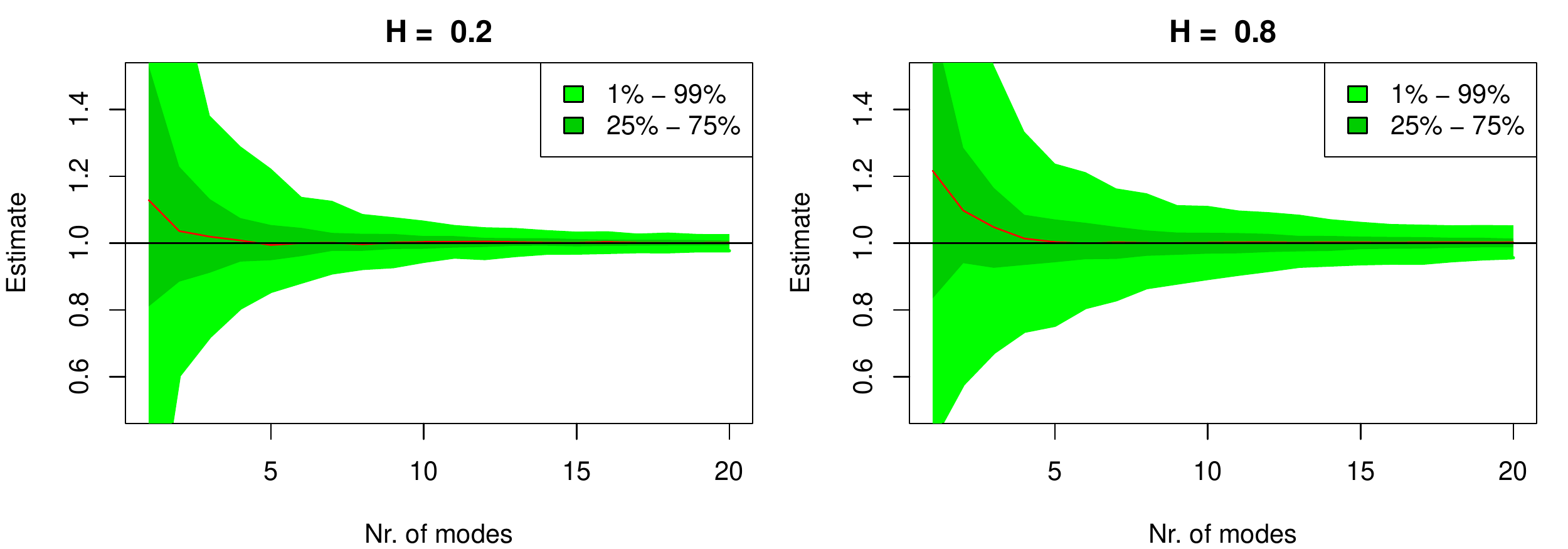}
  \includegraphics[scale=0.4,keepaspectratio]{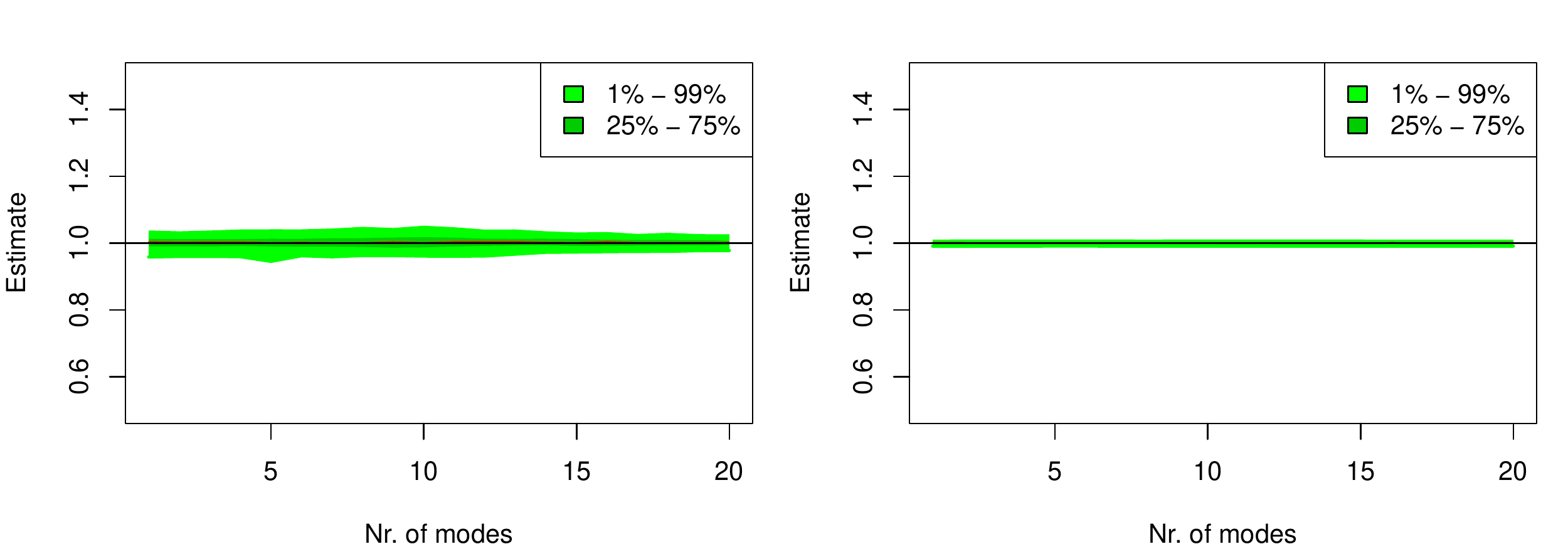}	
  \caption{Sample quantile ranges (green areas) and median (red line) of the simulated estimates (200 samples), true value is 1.\\
	Left column: Hurst index = 0.2 (rough case); right column: Hurst index = 0.8 (smooth case).\\
	First row: solutions with zero init. cond.; second row: solutions with non-zero init. cond.
	}
  \label{fig:convergence_quantiles1D}
\end{figure}

\begin{figure}
	\includegraphics[scale=0.4,keepaspectratio]{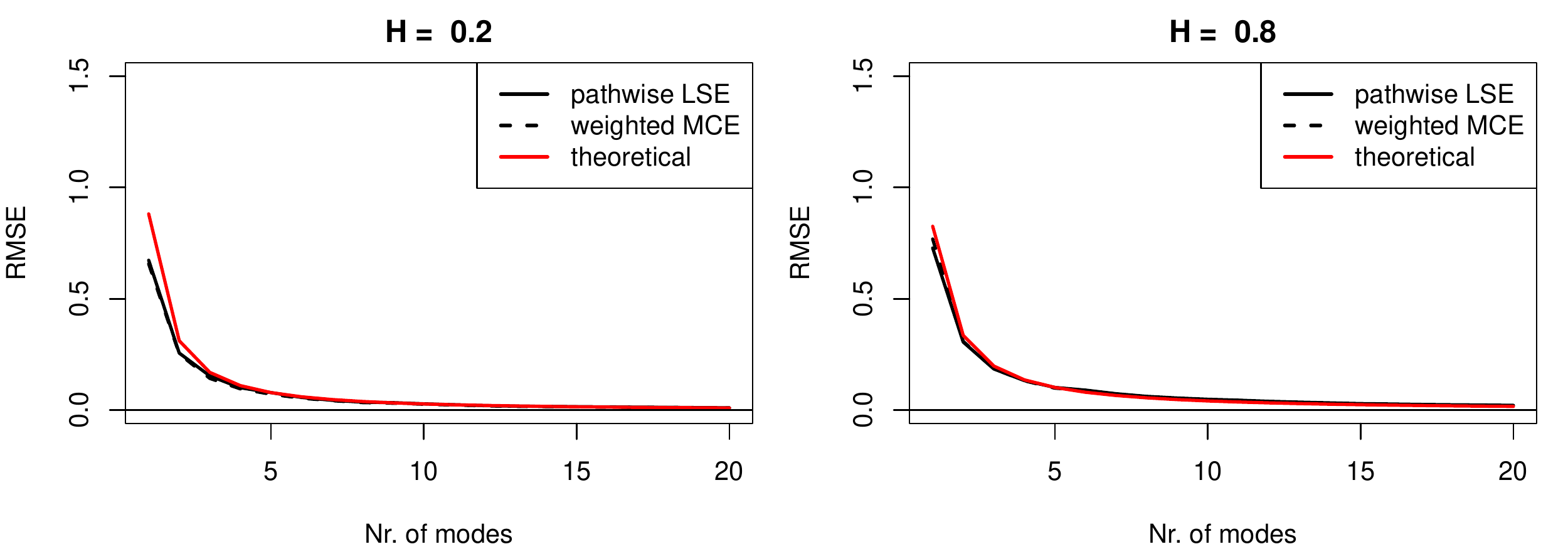}
  \includegraphics[scale=0.4,keepaspectratio]{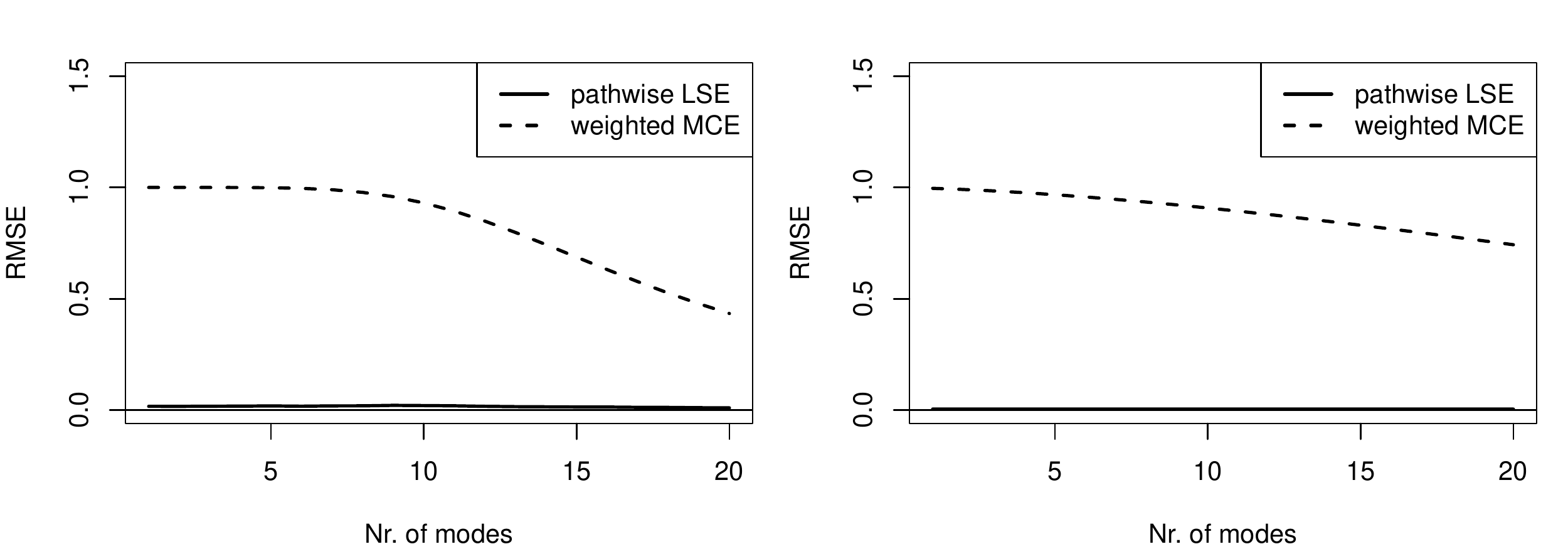}
  \caption{Root mean square error (RMSE) for pathwise LSE and weighted MCE and its theoretical speed of convergence from \eqref{eq:speed HE lamba_1} (applicable only for zero initial condition).\\
	Left column: Hurst index = 0.2 (rough case); right column: Hurst index = 0.8 (smooth case).\\
	First row: solutions with zero init. cond.; second row: solutions with non-zero init. cond.
	}
	\label{fig:convergence_RMSE1D}
\end{figure}

\begin{figure}
	\includegraphics[scale=0.4,keepaspectratio]{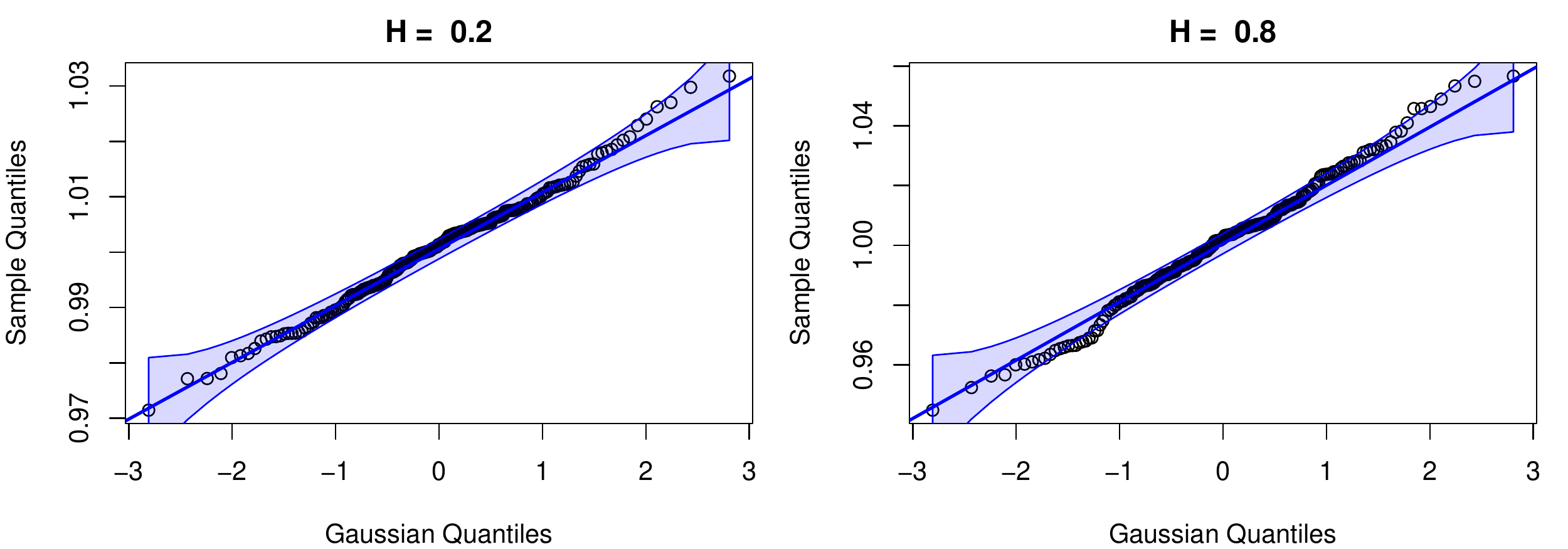}
	\includegraphics[scale=0.4,keepaspectratio]{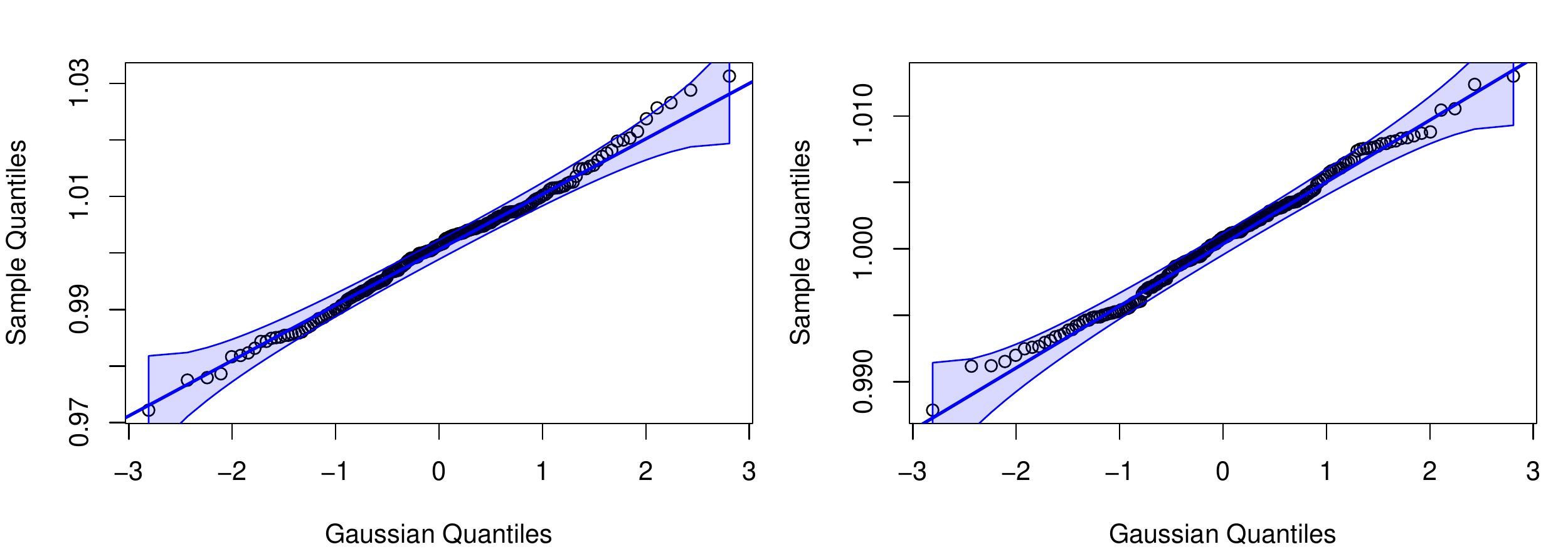}
  \caption{Q-Q plot for sample of 200 pathwise LSE estimates (calculated from 40 Fourier modes) including 95\% confidence envelope for normal distribution.\\
	Left column: Hurst index = 0.2 (rough case); right column: Hurst index = 0.8 (smooth case).\\
	First row: solutions with zero init. cond.; second row: solutions with non-zero init. cond.	
	}
  \label{fig:QQplot1D}
\end{figure}

\subsection{2D stochastic heat equation with coupling with surroundings}
Equations of this type are popular (not only) in physical oceanography, where they model fluctuations of the temperature of the top layer of the ocean around its long-time average, see \cite{PiterbargRozovsky1996} or \cite{LototskyRozovsky2017}. These equations fall into the setting of Example in~\ref{subsect: Example} with $d=2$ (a two-dimensional domain being the surface of the ocean), where the 2D Laplace operator models thermal conduction along the water surface, the mean-reversion term reflects atmosphere and ocean coupling and the noise term represents random forcing (e.g. vertical heat fluxes due to turbulent environment) -- for more details see \cite{PiterbargRozovsky1996}. The authors in \cite{PiterbargRozovsky1996} and \cite{LototskyRozovsky2017} (Section 6.1.1) discuss estimation of parameters in these equations based on observations in spectral domain under the assumption that the noise process is white in time and space (generated by a standard cylindrical Brownian motion). In contrast, we study parameter estimation assuming more general noise, which can be autocorrelated in time (generated by a cylindrical fractional Brownian motion).

In particular, we simulate equation \eqref{eq: HE_example} with the following setting:
\begin{itemize}
	\item $d = 2$, $\mathcal{O} = (0,1)^2$ and time horizon $T = 1$,
	\item $\lambda_1 = 1$ (the parameter to be estimated),  $\lambda_2 = 1$, 
	\item Hurst index $H = 0.3$, zero initial condition.
\end{itemize}
We used again the Spectral Galerkin Method with eigenfuctions $\sin(k_1 \pi \xi_1) \times \sin(k_2 \pi \xi_2)$  and corresponding eigenvalues $1 + (k_1^2 + k_2^2) \pi^2$, where $k_1, k_2 = 0,1,2,\dots$ and $(\xi_1,\xi_2) \in (0,1)^2$.  

The simulated distribution of the heat along the water surface at specific time instants is shown in Figure~\ref{fig:sample_solution2D}. Results of the Monte Carlo study of the behavior of the pathwise LSE obtained from 200 runs are depicted in Figure~\ref{fig:sim_results2D}. They confirm our theoretical findings on the almost-sure convergence of the pathwise LSE and its speed of convergence (in terms of RMSE). Moreover, Q-Q plot indicate its asymptotic normality, although we do not provide theoretical reasoning (for conciseness).

\begin{figure}
	\includegraphics[scale=0.5, keepaspectratio]{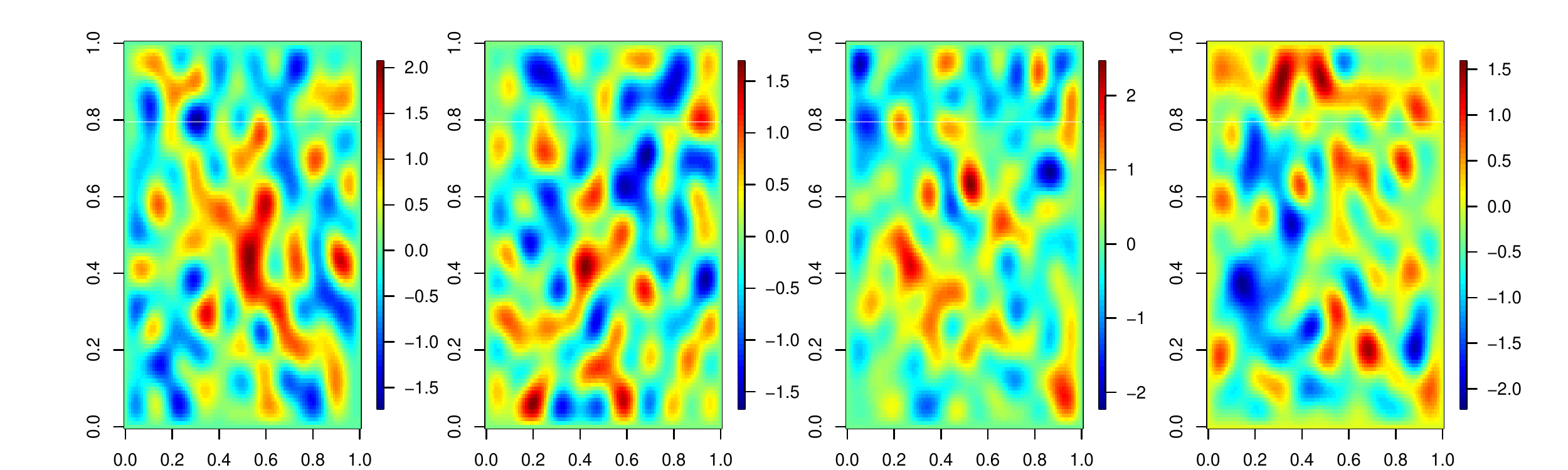}
  \caption{Heat maps of snapshots of a sample solution of 2D stochastic heat equation with mean reversion on a rectangular physical space domain taken at four time instants (from left to right: t=0.2, 0.4, 0.6 and 0.8).}
  \label{fig:sample_solution2D}
\end{figure}

\begin{figure}
	\includegraphics[scale=0.45, keepaspectratio]{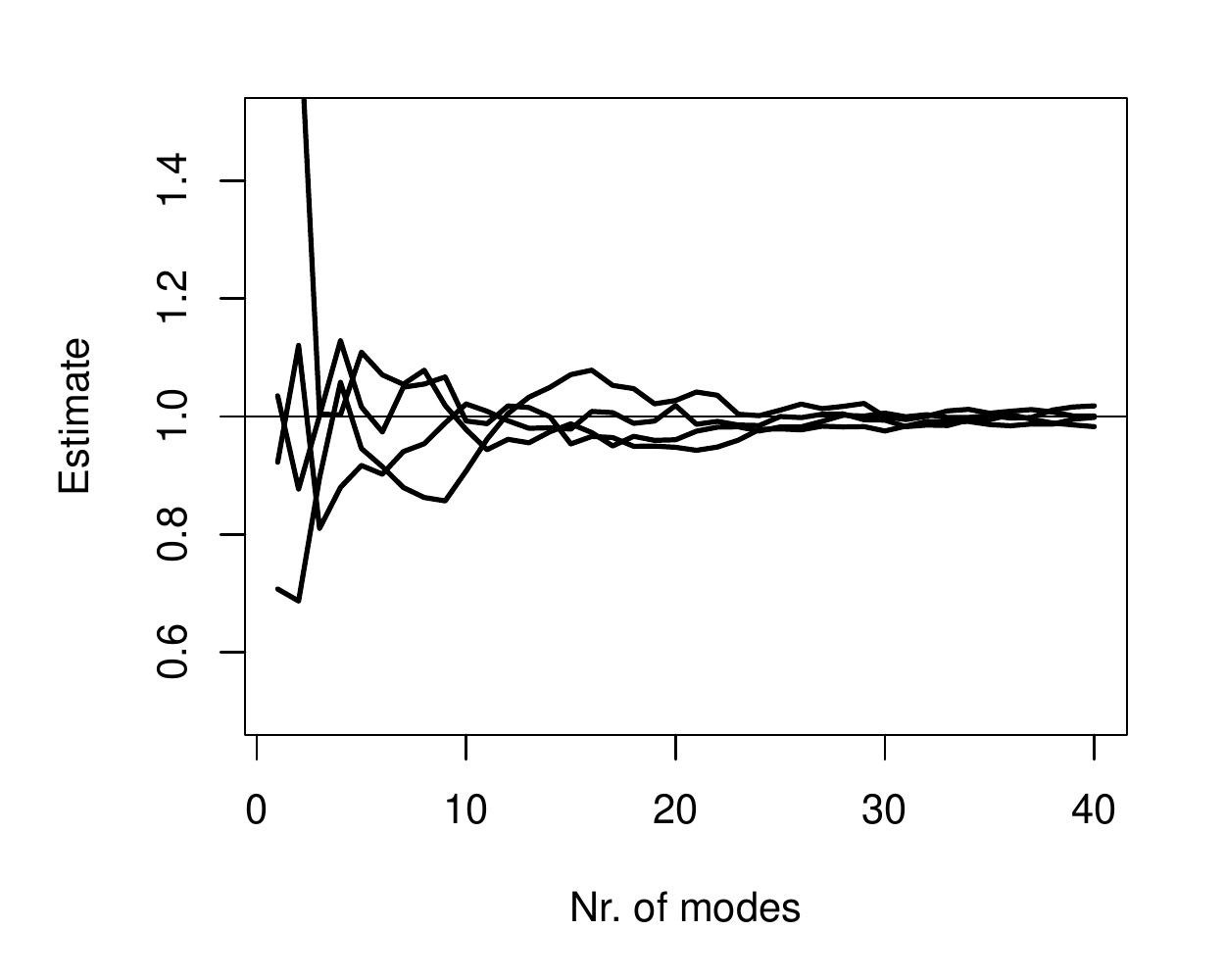}
	\includegraphics[scale=0.45, keepaspectratio]{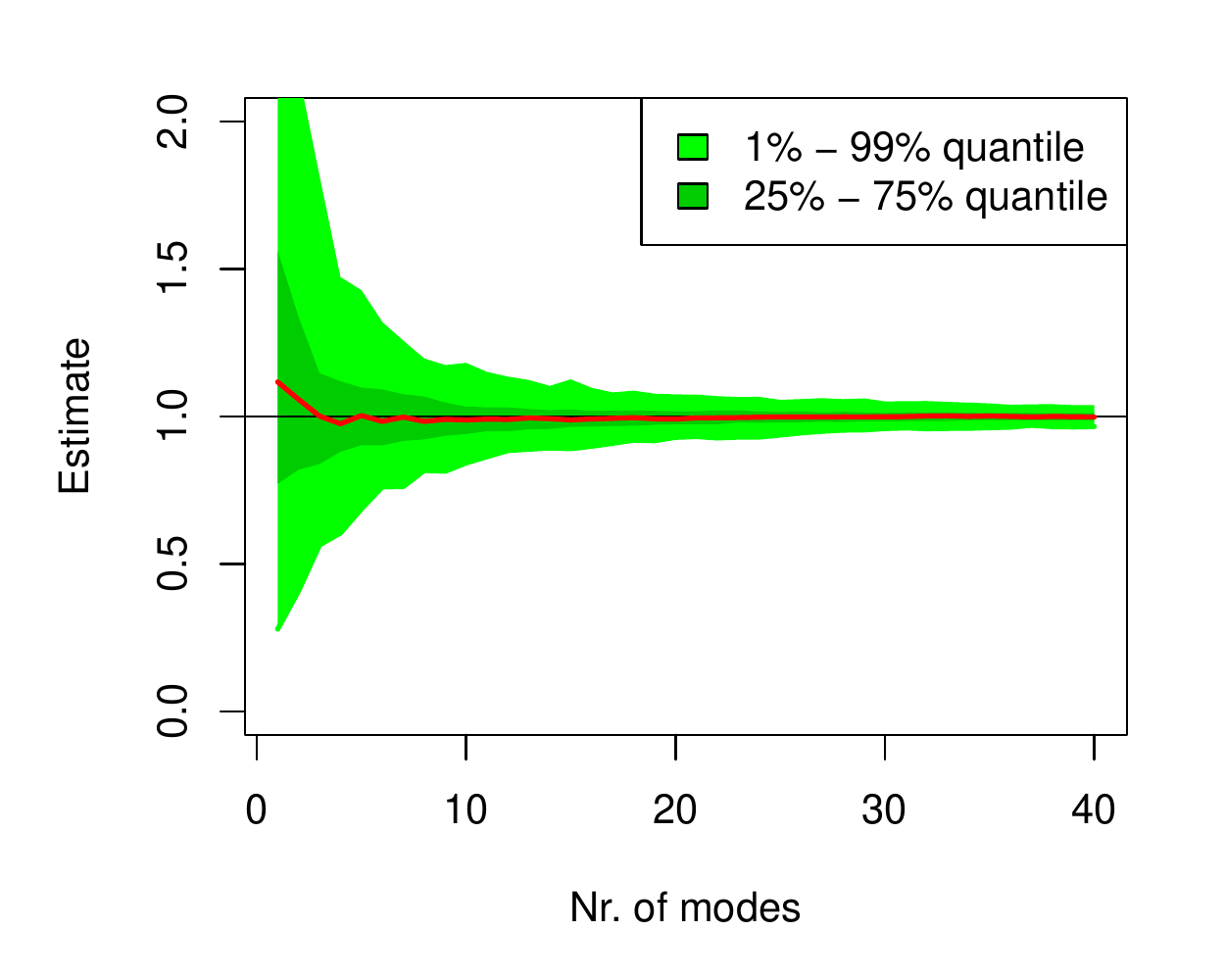}
	\includegraphics[scale=0.45, keepaspectratio]{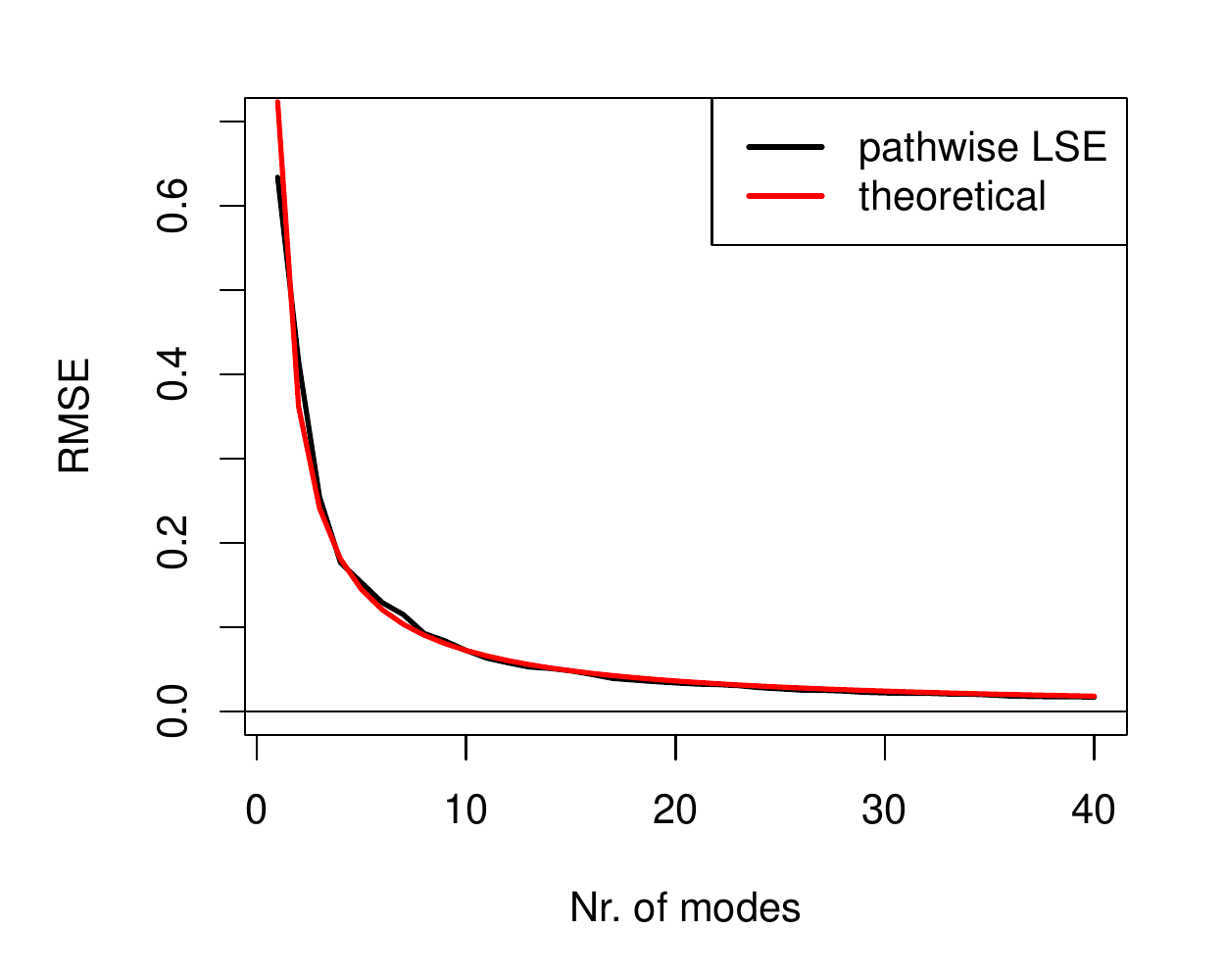}
	\includegraphics[scale=0.45, keepaspectratio]{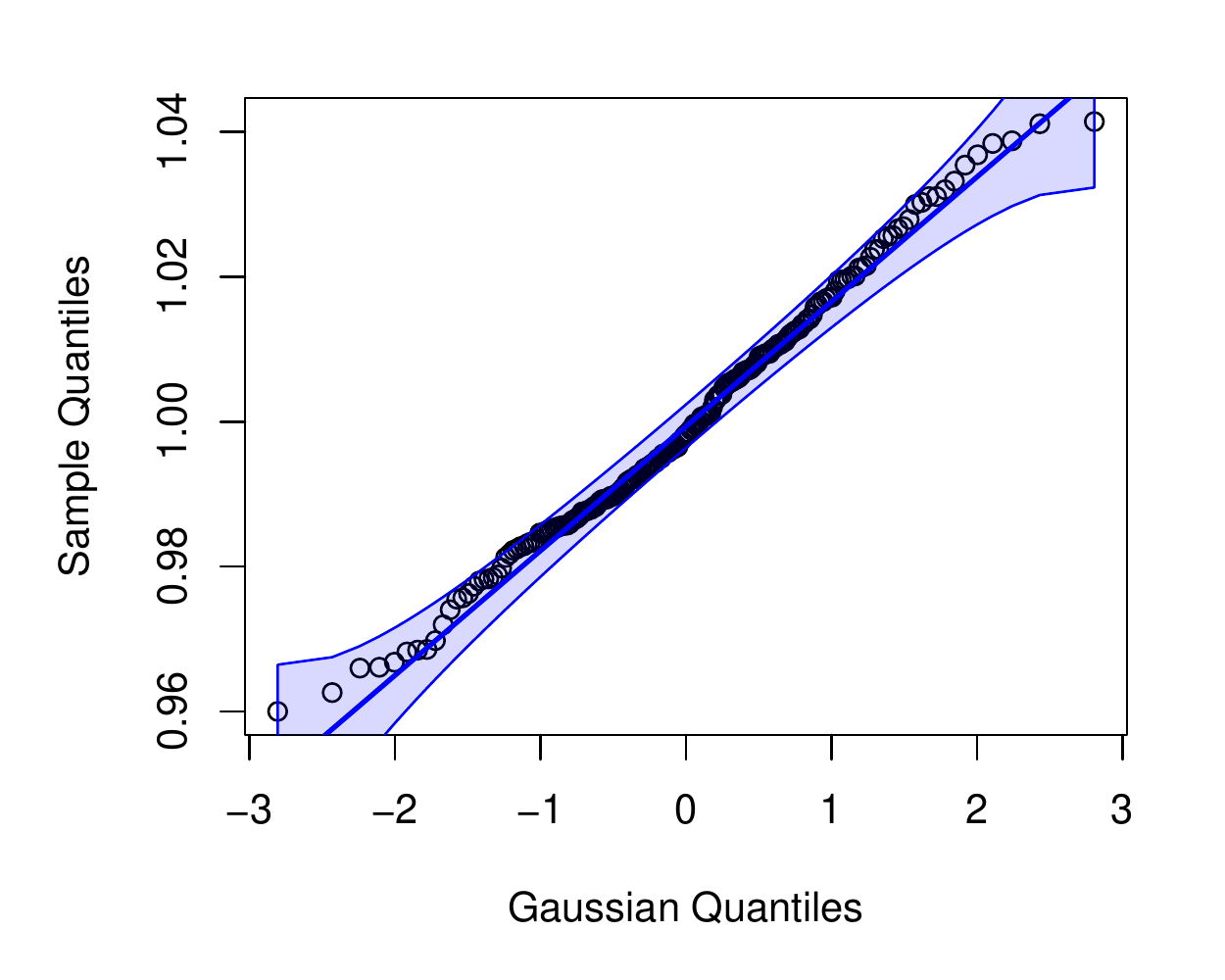}
  \caption{Characteristics of the convergence of pathwise LSE for 2D heat equation with mean reversion. 200 runs, Zero IC and H=0.3. \\
	Top-left: Four trajectories of estimates calculated from four different samples (realizations).\\
	Top-right: Sample quantile ranges (green areas) and median (red line), true value is 1.\\
	Bottom-left: Convergence of sample RMSE and theoretical RMSE, see \eqref{eq:speed HE lamba_1}.\\ 
	Bottom-right: Q-Q plot for sample of estimates (from 40 Fourier modes) including 95\% confidence envelope for normal distribution.
	}
  \label{fig:sim_results2D}
\end{figure}

\subsection{Discussion}
Results of the simulations confirm our findings on strong consistency of the pathwise LSE and on the speed of the convergence. In case of zero initial condition, it behaves similarly to the weighted MCE. However, for non-zero initial condition, pathwise LSE significantly outperforms the weighted MCE. This is in accordance with similar findings from finite-dimensional models (cf. \cite{KS20_1} or \cite{KS20_2}), where LSE-type estimators improve with far-from-stationary initial conditions (the drift part dominates the noise part  in the dynamics of the process in such case), whereas the ergodic-type estimators (such as the weighted MCE) reflecting long-term stationary behavior are ruined.

The theory for the weighted MCE, developed in \cite{Kriz2020}, covers only single-operator equations, so it is not directly applicable for the second example (2D stochastic heat equation with coupling with surroundings).  To our best knowledge, the pathwise LSE is the first efficient tool for parameter estimation in this equation (important in oceanography, for example) when the noise is generated by a cylindrical fractional Brownian motion with general Hurst parameter $0< H <1$. 

Although we did not address the asymptotic normality of the pathwise LSE in this paper (to keep it concise), we conjecture that it holds. Our conjecture is based on simulation results and asymptotic similarity with the weighted MCE, whose asymptotic normality has been proven in \cite{Kriz2020}.  

\section{Conclusions}\label{sect:Conclusions}
Using the least-squares formalism we have derived a spectral version of the least-squares estimator of the drift parameter in linear SPDEs with additive fractional noise for arbitrary $H \in (0,1)$. This estimator is strongly consistent in space, however, it can not be directly implemented pathwise, since it contains divergence-type stochastic integral w.r.t. a fractional process. This fact prevents its simulations and its use in practice. We addressed this issue by eliminating the stochastic integral getting a novel pathwise LSE. This estimator is constructed trully pathwise and, moreover, is robust to minor perturbations of the observed process (it does not contain differentiation of the observed trajectory), which makes it particularly useful in practice.  

Using the standard tools from classical probability and analysis, as well as modern tools from Malliavin calculus, we demonstrated that the newly developed pathwise LSE is strongly consistent in space and we conjecture that it is also asymptotically normal. Simulations reveal that it can efficiently utilize information about the unknown parameter from both non-stationary and stationary parts to the observed process.

The presented pathwise least-squares estimation procedure appears to be promising approach for various equations driven by fractional Brownian motion. The potentially interesting directions of further research may include (but are not limited to) proving asymptotic normality, studying different observation/sampling schemes, different asymptotic regimes, or generalization to semilinear stochastic equations driven by fractional Brownian motion.  


\appendix

\section{Elements from Malliavin calculus}

In the Appendix, we describe some basic constructions from Malliavin calculus in the setting of our problem. These constructions form a key ingredient for the study of asymptotic behavior of our estimators. 

\subsection{Construction of an infinite-dimensional isonormal Gaussian process}

Let $W^{(k)}=\big\{W^{(k)}(h^{(k)}),\, h^{(k)}\in\mathcal{H}^{(k)}\big\}$ be an isonormal Gaussian process generated by a fractional Brownian motion $\{B_k^H(t),t\geq 0\}$ for any $k\in\mathbb{N}$ and defined over a separable Hilbert space $\mathcal{H}^{(k)}$. The processes $W^{(k)}$ are mutually independent. 

Define
\[\mathcal{H}=\bigoplus_{n=1}^{\infty} \mathcal{H}^{(n)}\]
as a direct sum of $\mathcal{H}^{(n)}$'s, i.e.
\[\mathcal{H}=\big\{(h^{(1)},h^{(2)},\ldots),\ h^{(n)}\in\mathcal{H}^{(n)} \wedge \sum_{n=1}^{\infty}\|h^{(n)}\|^2_{\mathcal{H}^{(n)}}<\infty\big\},\]
with the scalar product
\[\big\langle(h^{(1)},h^{(2)},\ldots),(g^{(1)},g^{(2)},\ldots)\big\rangle_{\mathcal{H}}=\sum_{n=1}^{\infty}\langle h^{(n)},g^{(n)}\rangle_{\mathcal{H}^{(n)}}.\]
Then $\mathcal{H}$ is a separable Hilbert space.

For $h=(h^{(1)},h^{(2)},\ldots)\in\mathcal{H}$ define
\[W(h)=\lim_{N\rightarrow\infty}\sum_{n=1}^{N} W^{(n)}(h^{(n)}).\]
Note that this limit, understood in $L^2$-sense, is well-defined because 
\[\E\Big(\sum_{n=N}^{\infty} W^{(n)}(h^{(n)})\Big)^2=\sum_{n=N}^{\infty}\mathbb{E}\big(W^{(n)}(h^{(n)})\big)^2=\sum_{n=N}^{\infty}\|h^{(n)}\|^2_{\mathcal{H}^{(n)}}\xrightarrow[N\rightarrow\infty]{}0.\]
Since 
\begin{align*}
\E\big(W(h) W(g)\big)&=\E \Big(\sum_{n=1}^{\infty} W^{(n)}(h^{(n)})\Big)\Big(\sum_{n=1}^{\infty} W^{(n)}(g^{(n)})\Big)\\&=\sum_{n=1}^{\infty}\E\big(W^{(n)}(h^{(n)}) W^{(n)}(g^{(n)})\big)=\sum_{n=1}^{\infty}\langle h^{(n)},g^{(n)}\rangle_{\mathcal{H}^{(n)}}=\langle h,g\rangle_{\mathcal{H}}
\end{align*}
for any $h=(h^{(1)},h^{(2)},\ldots),g=(g^{(1)},g^{(2)},\ldots)\in\mathcal{H}$ the process $\big\{W(h),h\in\mathcal{H}\big\}$ is an isonormal Gaussian.

Denote by $C\!H_n^{(k)}$ the $n$-th Wiener chaos associated with $W^{(k)}$ for any $k\in\mathbb{N}$, i.e. $C\!H_n^{(k)}$ is the closed (in $L^2(\Omega)$) linear subspace generated by
\[ \big\{H_n\big(W^{(k)}(h^{(k)})\big),\|h^{(k)}\|_{\mathcal{H}^{(k)}}=1 \big\}, \]
where $H_n$ is the $n$-th Hermite polynomial. Similarly, let $C\!H_n$ be the $n$-th Wiener chaos associated with $W$, i.e. $C\!H_n$ is the closed linear subspace generated by 
\[\big\{H_n\big(W(h)\big),\|h\|_{\mathcal{H}}=1 \big\}. \] Fix $k\in\N$, take arbitrary $h^{(k)}\in\mathcal{H}^{(k)}$ such that $\|h^{(k)}\|_{\mathcal{H}^{(k)}}=1$ and define \[h=(0,\ldots,0,\underset{\quad \uparrow k\mathrm{th}}{h^{(k)}},0,\ldots,0).\]
Then
\[\|h\|^2_{\mathcal{H}}=\|h^{(k)}\|^2_{\mathcal{H}^{(k)}}=1\]
and
\[W(h)=\sum_{n=1}^{\infty} W^{(n)}(h^{(n)})=W^{(k)}(h^{(k)}).\]
Consider $X=H_n\big(W^{(k)}(h^{(k)})\big)\in C\!H_n^{(k)}$. Then  $X = H_n\big(W(h)\big)$ and $X\in C\!H_n$. By the linearity and $L^2(\Omega)$-closedness of $C\!H_n$ it follows that
\begin{equation}\label{eq: chaos embedding}
    C\!H_n^{(k)}\subset C\!H_n\quad \forall k\in\N.
\end{equation}

\subsection{Selected functionals of the solution in the second chaos}

It can be shown (see e.g. \cite{HNZ17}) that 
\[\int_0^T x_k(t)\d B^H_k(t) \in C\!H_2^{(k)}, \quad \int_0^T x_k^2(t)\d t - \E \left[\int_0^T x_k^2(t)\d t\right] \in C\!H_2^{(k)}. \]
Using the construction of the infinite-dimensional isonormal Gaussian process above and the chaos embedding \eqref{eq: chaos embedding}, we easily get for  
\[C_N= \sum_{k=1}^N \alpha_k\int_0^T x_k(t)\d B^H_k(t),\]
and
\[D_N= \sum_{k=1}^N \alpha_k^2\int_0^T x_k^2(t)\d t,\]
that
\begin{equation}\label{eq: C_N D_N in CH_2}
C_N \in C\!H_2\quad\text{ and }\quad D_N - \E D_N  \in C\!H_2 \quad \forall N\in \mathbb{N}.    
\end{equation}

\subsection{Almost sure convergence on a fixed chaos}
The following lemma is an important tool to demonstrate strong consistency of our estimators. It turns the $L^2$-convergence of a sequence of variables on a fixed chaos into the almost sure convergence. This idea, based on the combination of hypercontractivity with Borel--Cantelli 0-1 law, has already been used before, but we decided to formulate it as a separate lemma, because we believe it is interesting on its own.

\begin{Lemma}\label{Lem: a.s. convergence} 
Let $F_N,N\in\N,$ be random variables in a fixed Wiener chaos of an isonormal Gaussian process such that for some constants $c>0$ and $\delta>0$ the inequality 
\begin{equation}
\E F^2_N \leq \frac{c}{N^{\delta}}\nonumber
\end{equation}
holds for any $N\in\N$. Then $F_N\xrightarrow[N\rightarrow\infty]{} 0$ a.s.
\end{Lemma}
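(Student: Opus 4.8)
The plan is to exploit the fact that the $F_N$ live in a \emph{fixed} Wiener chaos, where all $L^q$-norms are mutually equivalent, and then to convert the resulting polynomial decay of high moments into summable tail probabilities via Markov's inequality and the Borel--Cantelli lemma. Denote by $p$ the order of the fixed chaos containing all the $F_N$. The central ingredient is the hypercontractivity property of the Ornstein--Uhlenbeck semigroup (Nelson's inequality), which yields, for every $q \geq 2$ and every $X \in C\!H_p$, the bound
\[
\|X\|_{L^q(\Omega)} \leq (q-1)^{p/2}\, \|X\|_{L^2(\Omega)}.
\]
This is the only genuinely ``hard'' input; the rest of the argument is elementary and self-contained.

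First I would fix an arbitrary integer $q \geq 2$ and apply the hypercontractivity bound to $F_N \in C\!H_p$ together with the hypothesis $\E F_N^2 \leq c\, N^{-\delta}$, obtaining
\[
\E |F_N|^q \leq (q-1)^{pq/2}\, \big(\E F_N^2\big)^{q/2} \leq (q-1)^{pq/2}\, c^{q/2}\, N^{-\delta q/2}.
\]
Next, for a fixed $\varepsilon>0$, Markov's inequality gives
\[
\P\big(|F_N| > \varepsilon\big) \leq \frac{\E|F_N|^q}{\varepsilon^q} \leq \frac{(q-1)^{pq/2}\, c^{q/2}}{\varepsilon^q}\, N^{-\delta q/2}.
\]
At this point I would choose $q$ large enough that $\delta q/2 > 1$, i.e.\ $q > 2/\delta$; with such a choice the series $\sum_{N=1}^\infty N^{-\delta q/2}$ converges, so $\sum_{N} \P(|F_N|>\varepsilon) < \infty$.

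Finally, the Borel--Cantelli lemma yields $\P\big(|F_N|>\varepsilon \text{ infinitely often}\big)=0$, which means $\limsup_{N\to\infty}|F_N| \leq \varepsilon$ almost surely. Intersecting the corresponding full-probability events over $\varepsilon = 1/m$, $m\in\N$, forces $\limsup_{N\to\infty}|F_N| = 0$ almost surely, i.e.\ $F_N \to 0$ a.s., as claimed. I do not expect any real obstacle beyond invoking hypercontractivity correctly; the only point requiring a little care is that the constant $(q-1)^{pq/2} c^{q/2}/\varepsilon^q$ is harmless since $q$ is fixed once chosen, so the whole of the $N$-dependence sits in the summable factor $N^{-\delta q/2}$.
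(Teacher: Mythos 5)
Your proof is correct and takes essentially the same approach as the paper: hypercontractivity on a fixed Wiener chaos to bound $\E|F_N|^q$ by a constant times $(\E F_N^2)^{q/2}$, then Markov's inequality and Borel--Cantelli. The only cosmetic difference is that the paper tests against a shrinking threshold $N^{-\zeta}$ (so a single application of Borel--Cantelli suffices and an almost sure rate falls out for free), whereas you use a fixed $\varepsilon$ and intersect the full-probability events over $\varepsilon = 1/m$; both are equally valid.
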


\begin{proof}
Consider parameters~$\zeta$ and~$\eta$ so that $0< \zeta < \delta/2$ and $\eta > \frac{1}{\delta/2 - \zeta}$. Denote by $C$ a~positive constant (independent of~$N$), which may change from line to line and calculate 
$$\P\big(|F_N|> N^{-\zeta}\big)\leq \frac{\E|F_N|^{\eta}}{N^{-\zeta\eta}}\leq C\frac{(\E F^2_N)^{\eta/2}}{N^{\zeta\eta}}\leq C \frac{1}{N^{\eta(\delta/2-\zeta)}},$$ where Chebyshev's inequality and hypercontractivity property on the fixed Wiener chaos (see e.g.~\cite{NP12}, Theorem 2.7.2) were used. Application of Borel--Cantelli lemma yields the almost sure convergence.

\end{proof}

\section{Cylindrical fractional Brownian motion and fractional Gaussian noise}\label{Appendix:cfBm}

The use of cylindrical fractional Brownian motion as a rigorous model for fractional Gaussian noise $\{\eta^H(t,\xi);\,t>0, \xi\in\mathcal O\}$ from equation \eqref{eq: HE_example} can be found in several publications. More detailed exposition can be found e.g. in \cite{DMPD2009}. In this Appendix, we briefly outline the connection between the two objects.\\

Let us begin with white noise in spatial dimension $\mathcal O \subset \mathbb{R}^d$ (no time coordinate considered). A natural space associated with \eqref{eq: HE_example} is $V=L^2(\mathcal O)$ with an orthonormal basis $\{e_k\}_{k=1}^{\infty}$. Take a sequence of i.i.d. real-valued random variables $\{U_k\}_{k=1}^{\infty}$ having standard Gaussian distribution. Now we can formally model the white noise in space as
\begin{equation}\label{WSN_def}
\eta(\xi) = \sum_{k=1}^{\infty} e_k(\xi) U_k, \quad \xi \in \mathcal{O}.
\end{equation}
Indeed, take arbitrary $f,g \in L^2(\mathcal O)$ and calculate (again formally)
\begin{equation}\nonumber
\int_{\mathcal{O}} f(\xi) \eta(\xi) \d \xi = \sum_{k=1}^{\infty} U_k \biggl(\int_{\mathcal{O}} f(\xi) e_k(\xi) \d \xi \biggr),  
\end{equation}
which is a centered Gaussian random variable with variance $||f||^2_V$, and
\begin{align*}
\E &\biggl(\int_{\mathcal{O}} f(\xi) \eta(\xi) \d \xi \biggr) \biggl(\int_{\mathcal{O}} g(\xi) \eta(\xi) \d \xi \biggr) = \E \biggl(\sum_{k=1}^{\infty} U_k \left\langle f,e_k\right\rangle_V \biggr) \biggl(\sum_{k=1}^{\infty} U_k \left\langle g,e_k\right\rangle_V \biggr) \\
&= \sum_{k=1}^{\infty} \left\langle f,e_k\right\rangle_V \left\langle g,e_k\right\rangle_V = \int_{\mathcal{O}} f(\xi) g(\xi) \d \xi.
\end{align*}
In particular, by taking $f$ and $g$ the indicator functions, we get that the noise is uncorrelated on arbitrary two disjoint subsets of $\mathcal{O}$ and it has constant intensity (variance is proportional to the volume of the subset). 
\begin{Remark}
Although the series in \eqref{WSN_def} does not converge in the $L^2(\Omega)$ sense in the space $V$, it does converge in some larger Hilbert space $\tilde{V}$ so that there exits a Hilbert-Schmidt embedding of $V$ into $\tilde{V}$ (this corresponds to the fact that white noise can not be considered as a proper function, but rather as a~distribution).
\end{Remark}

Continue with modeling of the fractional (autocorrelated) noise in time domain. Recall that real-valued fractional Brownian motion $\{b^{H}(t), t \geq 0\}$ with Hurst index $H \in (0,1)$ is a centered Gaussian process with continuous trajectories starting from zero and with autocovariance function 
\[
\E b^{H}(t) b^{H}(s) = \frac{1}{2}\bigl(t^{2H} + s^{2H} - |t-s|^{2H}\bigr).
\] 
As a consequence, it has stationary Gaussian increments. If $H=1/2$ we obtain standard Wiener process (with independent increments), if $H<1/2$ we have negatively correlated increments and for $H>1/2$ we have positively correlated increments. Formal derivative of fBm (which exists only in distributional sense) thus represents a good model for fractional noise in time.\\

To conclude, take the family of independent real-valued fractional Brownian motions $\{B^H_k(t), t \geq 0\}, k=1,2,\ldots$ and construct the cylindrical fBm on $V=L^2(\mathcal O)$ using an orthonormal basis $\{e_k\}_{k=1}^{\infty}$ as a formal sum 
\begin{equation}\nonumber 
B^H(t) = \sum_{k=1}^{\infty} e_k B^H_k(t).
\end{equation}
Increments (differentials) of this cylindrical fBm can serve as a model for the noise that is white in space and fractional in time. Indeed, for a fixed $0<s<t$ we have
\[
B^H(t) - B^H(s) = \sum_{k=1}^{\infty} e_k \bigl(B^H_k(t) - B^H_k(s)\bigr),
\]
which is the (scaled) white-in-space noise. 
Next, for a fixed measurable $D \subset \mathcal{O}$ with the finite volume take
\begin{align*}
\left\langle B^H(t), 1_D \right\rangle_V &= \sum_{k=1}^{\infty} \biggl(\int_{\mathcal O} 1_D(\xi) e_k(\xi) \d \xi \biggr) B^H_k(t) \\
&= \sum_{k=1}^{\infty} w_k B^H_k(t) = \beta^{H}(t),
\end{align*}
which itself is a (scaled) fractional Brownian motion, since $\sum_{k=1}^{\infty} w^2_k = \|1_D\|^2_V < \infty$ and so it represents an integrated fractional-in-time noise.

\section*{Acknowledgments}
We would like to thank to the anonymous referee for valuable comments and suggestions that helped to improve clarity and readability this paper.


\begin{thebibliography}{99}

\bibitem{ABJR20_pre}  R. Altmeyer, T. Bretschneider, J. Jan\'ak and M. Rei{\ss}. {\it Parameter Estimation in an SPDE Model for Cell Repolarisation}. Preprint arxiv:2010.06340, 2020.

\bibitem{AR20_pre} R. Altmeyer and M. Rei{\ss}. {\it Nonparametric estimation for linear SPDEs from local measurements}. Ann. Appl. Probab. {\bf 31} (2021), no. 1, 1--38. 

\bibitem{ACP20_pre} R. Altmeyer, I. Cialenco and G. Pasemann. {\it Parameter estimation for semilinear SPDEs from local measurements}. Preprint arXiv:2004.14728v2, 2020. 

\bibitem{Berzin_et_al2014} C. Berzin, A. Latour and J. R. Le{\'o}n. {\it Inference on the Hurst parameter and variance of diffusions driven by fractional Brownian motion}, Lectures notes in statistics 216. Springer, 2014.

\bibitem{Biagini_et_al_2008}
F. Biagini, Y. Hu, B. \O{}ksendal and T. Zhang. {\it Stochastic Calculus for Fractional Brownian Motion and Applications}; Springer: London, UK, 2008.


\bibitem{ChKM03} P. Cheridito, H. Kawaguchi and M. Maejima. {\it Fractional Ornstein-Uhlenbeck processes}.  Electron. J.~Probab. {\bf 8} (2003), 14 p.


\bibitem{CDK20}  I. Cialenco, F. Delgado-Vences, and H.-J. Kim. {\it Drift estimation for discretely sampled SPDEs}. Stoch PDE: Anal Comp {\bf 8} (2020), 895--920. 

\bibitem{CLP09} I. Cialenco, S. V. Lototsky and J. Posp\'{\i}\v{s}il. {\it Asymptotic properties of the maximum likelihood estimator for stochastic parabolic equations with additive fractional Brownian motion}. Stoch. Dyn. {\bf 9} (2009), no. 02, 169--185.

\bibitem{Coeurjolly2008} J. F. Coeurjolly. {\it Hurst exponent estimation of locally self-similar Gaussian processes using sample quantiles}. Ann. Statist. {\bf 36} (2008), 1404--1434.

\bibitem{DMPD2009} T. Duncan, B. Maslowski, and B. Pasik-Duncan. {\it Semilinear Stochastic Equations in a Hilbert Space with a Fractional Brownian Motion}. SIAM J. Math. Analysis. {\bf 40} (2009), 2286--2315. 

\bibitem{FrizDiehlMai2016} J. Diehl, P. Friz and H. Mai. {\it Pathwise stability of likelihood estimators for diffusions via rough paths}. Ann. Appl. Probab. {\bf 26} (2016), no. 4, 2169--2192. 


\bibitem{gloter2007} A. Gloter and M. Hoffmann. {\it Estimation of the Hurst parameter from discrete noisy data}. Ann. Statist. {\bf 35} (2007), 1947--1974.

\bibitem{HN10} Y. Hu and D. Nualart. {\it Parameter estimation for fractional Ornstein-Uhlenbeck processes}. Stat. Probab. Lett. {\bf 80} (2010), no. 11, 1030--1038.

\bibitem{HNZ17} Y. Hu, D. Nualart and H. Zhou. {\it Parameter estimation for fractional Ornstein-Uhlenbeck processes of general Hurst parameter}. Stat. Inference Stoch. Process {\bf 22} (2019), 111--142.

\bibitem{Istas-Lang1997} J. Istas and G. Lang. {\it Quadratic variations and estimation of the local H\"older index of a Gaussian process}. Ann. Inst. H. Poincare (Probab. et Stat.) {\bf 33} (1997), 407--436.

\bibitem{KM19} P. K\v r\'i\v z and B. Maslowski. {\it Central limit theorems and minimum-contrast estimators for linear stochastic evolution equations}. Stochastics {\bf 91} (2019), 1109--1140.

\bibitem{Kriz2020}  P. K\v r\'i\v z. {\it A space-consistent version of the minimum-contrast estimator for linear stochastic evolution equations}. Stochastics and Dynamics {\bf 20} (2020).

\bibitem{KS20_1} P. K\v r\'i\v z and L. Sza\l{}a. {\it Least-Squares Estimators of Drift Parameter for Discretely Observed Fractional Ornstein--Uhlenbeck Processes}. Mathematics {\bf 8} (2020), no. 5.

\bibitem{KS20_2} P. K\v r\'i\v z and L. Sza\l{}a. {\it The Combined Estimator for Stochastic Equations on Graphs with Fractional Noise}. Mathematics {\bf 8} (2020), no. 10.

\bibitem{Lord_etAl_2014} G. J. Lord, C. E. Powell and T. Shardlow. {\it An Introduction to Computational
Stochastic PDEs}. Cambridge Texts in Applied Mathematics. Cambridge University Press, Cambridge, 2014.

\bibitem{LototskyRozovsky2017} S. V. Lototsky and B. L. Rozovsky. {\it Stochastic Partial Differential Equations}. Universitext. Springer, Cham, 2017. 

\bibitem{MT13} B. Maslowski and C. A. Tudor. {\it Drift parameter estimation for infinite-dimensional fractional Ornstein-Uhlenbeck process}. Bull. Sci. math. {\bf 137} (2013) no. 7, 880--901.

\bibitem{Mishura_2008} Y. Mishura. {\it Stochastic Calculus for Fractional Brownian Motion and Related Processes}. Springer: Berlin/Heidelberg, Germany, 2008.

\bibitem{NP12} I. Nourdin and G. Peccati. {\it Normal Approximations with Malliavin Calculus: From Stein's Method to Universality}. Cambridge Tracts in Mathematics. Cambridge University Press, Cambridge, 2012.

\bibitem{PTV20} F. Panloup, S. Tindel and  M. Varvenne. {\it A general drift estimation procedure for stochastic
differential equations with additive fractional noise}. Electron. J. Statist. {\bf 14} (2020), no. 1, 1075--1136.

\bibitem{P_et_al21} G. Pasemann, S. Flemming, S. Alonso, C. Beta and W. Stannat. {\it Diffusivity Estimation for Activator-Inhibitor Models: Theory and Application to Intracellular Dynamics of the Actin Cytoskeleton}.  J. Nonlinear Sci. {\bf 31} (2021), no. 59, 1432--1467. 

\bibitem{PS20} G. Pasemann and W. Stannat. {\it Drift estimation for stochastic reaction-diffusion systems}. Electron. J. Statist. {\bf 14} (2020), no. 1, 547--579.

\bibitem{PiterbargRozovsky1996} L. Piterbarg and B.Rozovsky. {\it Maximum likelihood estimators in the equations of physical oceanography}, in Stochastic Modelling in Physical Oceanography, ed. by R. J. Adler, P. M{\"u}ller, B. Rozovsky. Progress in Probability, vol. 39 (Birkh{\"a}user Boston, Boston, 1996), 397--421.

\bibitem{Rosenbaum2008} M. Rosenbaum. {\it Estimation of the volatility persistence in a discretely observed diffusion model}. Stochastic Process. Appl. {\bf 118} (2008), 1434--1462.

\bibitem{SafarovVassiliev1997} Y. Safarov and D. Vassiliev. {\it The Asymptotic Distribution of Eigenvalues of Partial Differential Operators} volume 155 of {\it Translations of Mathematical Monographs}, American Mathematical Society, Providence, 1997.

\bibitem{Shubin2001} M. A. Shubin. {\it Pseudodifferential operators and spectral theory}. Springer-Verlag, Berlin, 2nd edition, 2001.
\end{thebibliography}


\end{document}